% 22.06 

\documentclass[12pt]{article}

\usepackage[a4paper, margin=2cm]{geometry}

\usepackage{amssymb}
\usepackage{amsmath,amsthm}
\usepackage{epstopdf}% To incorporate .eps illustrations using PDFLaTeX, etc.
\usepackage{subfigure}% Support for small, `sub' figures and tables
\usepackage{tikz}
\usepackage{tkz-graph}
\usepackage{mathtools}
\usepackage{csquotes}
\usepackage[bb=boondox]{mathalfa} % for double struck 1 and 0

 % double-struck 1  
 % double-struck 0  

%\theoremstyle{plain}% Theorem-like structures
\newtheorem{theorem}{Theorem}[section]
\newtheorem{corollary}[theorem]{Corollary}
\newtheorem{lemma}[theorem]{Lemma}
\newtheorem{proposition}[theorem]{Proposition}

\newtheorem{example}{Example}

\def\qed{\vbox{\hrule
  \hbox{\vrule\hbox to 5pt{\vbox to 8pt{\vfil}\hfil}\vrule}\hrule}}

\def\endproof{\unskip \nobreak \hskip0pt plus 1fill \qquad \qed \par \vspace{0.15cm}}

\newcommand{\mb}{\mathbb}

\bibliographystyle{plain}

\begin{document}

%\title{Graph Partition in $\ell_1$-norm,  Fiedler Theory \\ and Sparsest Cuts} 
\title{Combinatorial Fiedler Theory and Graph Partition}

\author{
 Enide Andrade\footnote{Center for Research and Development in Mathematics and Applications,
 Department of Mathematics,  University of Aveiro, Portugal. {\tt enide@ua.pt}} \\
 Geir Dahl\footnote{Department of Mathematics,   University of Oslo, Norway. {\tt geird@math.uio.no.} Corresponding author.}
 }

\maketitle

%\name{
%Enide Andrade\textsuperscript{a}$^{\dagger}$\thanks{$^\dagger$ Enide Andrade was supported in part by the
%Portuguese Foundation for Science and Technology (FCT-Funda\c c\~ao para a
%Ci\^encia e a Tecnologia), through CIDMA - Center for Research and
%Development in Mathematics and Applications, within project
%UID/MAT/04106/2013.  Email: enide@ua.pt},
%Lorenzo Ciardo\textsuperscript{b}$^{\star}$\thanks{$^\star$Email: lorenzci@math.uio.no},
%and Geir Dahl\textsuperscript{b}$^{\ast}$\thanks{$^\ast$Corresponding author. Email: geird@math.uio.no}}
%\affil{\textsuperscript{a}Center for Research and Development in Mathematics and Applications,\\
% Department of Mathematics,  University of Aveiro, Portugal.\\
% \textsuperscript{b} Department of Mathematics, University of Oslo, Norway
%}
%\received{....}

\date{}

%\begin{center} {\bf Published in {\em Linear and Multilinear Algebra} in 2017, see \\
%http://dx.doi.org/10.1080/03081087.2016.1274363}
%\end{center}

\begin{abstract}
 Partition problems in graphs are extremely important in applications, as shown in  the Data science and  Machine learning literature. One approach  is spectral partitioning based on a  Fiedler vector, i.e., an eigenvector corresponding to the second smallest eigenvalue $a(G)$ of the Laplacian matrix $L_G$ of the graph $G$. This problem corresponds to the  minimization of a   quadratic form associated with $L_G$, under  certain constraints involving the $\ell_2$-norm. We introduce and investigate a similar problem, but using the $\ell_1$-norm to measure distances. This leads to a new parameter $b(G)$ as the optimal value. We show that a well-known cut problem arises in this approach, namely the sparsest cut problem. We prove connectivity results and different bounds on this new parameter, relate to Fiedler theory and show explicit expressions for $b(G)$ for trees. We also comment on an $\ell_{\infty}$-norm version of the problem.
\end{abstract}

\noindent {\bf Key words.} Algebraic connectivity, graph partition, sparsest cut.

\noindent
	{\bf AMS subject classifications.} 
	 05C50; %Graphs and linear algebra (matrices, eigenvalues, etc.)
	 15A18; %Eigenvalues, singular values, and eigenvectors
	 05C05; %Trees
	 05C40 %Connectivity  

\section{Introduction} 
\label{sec: introduction}

Often real world networks contain clusters, that is, groups of points each with a large number of neighbors  among them and not many connections to the outside.  In Data science and Machine learning the task of {\em clustering} is very important. Given a set of data points (in some space) and their common properties measured in terms of distances, the clustering problem consists in finding subsets of these data points that are ``similar''. If the points to be clustered are vertices in a graph, and the edges connecting these vertices are the only information available, then the problem is called {\em graph clustering} \cite{Luxburg,Schaeffer}. There are many methods for graph clustering, and one popular such method is {\em spectral clustering}.   The basis is then spectral bisection where  a Fiedler vector \cite{Fiedler2}   is used for partitioning a graph into two connected subgraphs based  on the sign of the components of the vector. This splitting may be repeated for each of the parts and thereby obtain a desired partition. 

 Consider an unweighted (undirected) simple graph $G=(V,E)$ and let $n=|V|$. Throughout the paper we assume that $G$ is connected. Recall that the {\em Laplacian matrix} $L_G$ is the $n \times n$ matrix $L_G=D_G-A_G$ where $D_G$ is the diagonal matrix with the vertex degrees on the diagonal, and $A_G$ is  the adjacency matrix of $G$. 
$L_G$ is positive semidefinite and therefore it has only real, nonnegative eigenvalues. The algebraic connectivity $a(G)$  is the second smallest eigenvalue of  $L_G$, and it is known as a connectivity measure in the graph \cite{Fiedler_alg_conn,Molitierno11}. In particular, $G$ is connected if and only if $a(G)>0$. The matrix $L_G$ is singular, 0 is the smallest eigenvalue and a corresponding eigenvector is the all ones vector $e$. Therefore, by the Courant-Fischer theorem \cite{HornJohnson13}, 
\[
     a(G)=\min\{x^TL_G \,x: e^Tx=0, \; \|x\|_2=1\}.
\]
Here $\|x\|_2=(\sum_i x_i^2)^{1/2}$ is the (Euclidean) $\ell_2$-norm of $x =(x_1, x_2, \ldots, x_n) \in \mb{R}^n$. By using a standard factorization $L_G=B^TB$ where $B$ is the edge-vertex incidence matrix of $G$, we have the alternative expression
\begin{equation}
 \label{eq:smooth2}
     a(G)=\min\{\sum_{uv \in E} (x_u-x_v)^2: \sum_{v \in V} x_v=0, \; \|x\|_2=1\}.
\end{equation}
Thus, an eigenvector corresponding to the eigenvalue $a(G)$, usually called a {\em Fiedler vector}, can be seen as assigning  values to the vertices to obtain an optimal ``smoothing'' along edges, i.e., small difference between end points of edges, under the two normalization constraints, see \cite{Spielman10}. 
These constraints assure that we avoid a constant solution $x=\lambda e$ for some $\lambda$ and, also, the norm constraint avoids scaling (to get similar solutions) and the zero vector. We shall therefore call  the minimization problem in (\ref{eq:smooth2})  the {\em $\ell_2$-graph smoothing} problem. 
Here $\ell_2$ refers to the  fact that both the (objective) function $\sum_{uv \in E} (x_u-x_v)^2$ and the norm constraint involve the Euclidean norm $\ell_2$. 

The motivating question of our study  is:

\begin{itemize}
 \item What happens if we modify the optimization problem (\ref{eq:smooth2}) by changing the norm involved to  the $\ell_1$-norm (the sum norm)?
\end{itemize}
The  $\ell_1$-norm of $x =(x_1, x_2, \ldots, x_n) \in \mb{R}^n$ is defined by $\|x\|_1=\sum_i |x_i|$.
In fact,   often in mathematics different norms may be used in the study of some (approximation) problem. A well-known such example is the linear approximation problem $\min_{x \in \mb{R}^n} \|Cx-b\|$ where $C$ is an $m \times n$ (real) matrix, $b \in \mb{R}^m$ and $\| \cdot\|$ is some vector norm. When the norm is $\ell_2$ we obtain the least squares problem, and for the norms $\ell_1$ and $\ell_{\infty}$ one may use linear programming to solve the problem.  Therefore, it is important to understand the properties of solutions, and how they depend on the choice of norm.\\

Therefore, our main goal is to consider a new graph smoothing problem which we call {\em $\ell_1$-graph smoothing}. It is similar to $\ell_2$-graph smoothing except that we change the norm from the $\ell_2$-norm to the $\ell_1$-norm. Let $G=(V,E)$ be a given graph with at least one edge. 
The {\em $\ell_1$-graph smoothing} problem is the following optimization problem
\begin{equation}
 \label{eq:smooth1}
     b(G)=\min\{\sum_{uv \in E} |x_u-x_v|: \sum_{v \in V} x_v=0, \; \|x\|_1=1\}.
\end{equation}
Clearly the minimum here is attained by some $x$ as the constraints define a compact set and the function to be minimized is continuous. Note that the constraint set is not a convex set. An optimal solution $x$ in (\ref{eq:smooth1}) will be called an {\em $\ell_1$-Fiedler vector}. Then $x$ satisfies  $\sum_v x_v=0$, $\|x\|_1=1$ and $\sum_{uv \in E} |x_u-x_v|=b(G)$.

A main contribution  of our paper is indicated in  Figure \ref{fig:contribution} (where $x \perp e$ means $e^Tx=0$). 
%We introduce and study the new {\em $\ell_1$-graph smoothing} problem. 
The two graph smoothing problems are indicated in the two last columns of the figure. One sees how the two problems are quite similar. Based on several intermediate results we establish that optimal solutions in the  new problem correspond to  so-called sparsest cuts. This also means that strong connections, with bounds, between the two optimal values $a(G)$ and $b(G)$ may be found. Moreover, there are important consequences in terms of computational complexity.  A main  contribution of this paper is to show that {\em there is a very natural optimization approach that is underlying sparsest cuts}. 
Thus the problem (\ref{eq:smooth1}) can be handled using a {\em combinatorial approach}. For related combinatorial approaches to Perron values of trees, see \cite{EALCGD19,EAGD17}. We will discuss the computational complexity for these graph smoothing problems, although this is not done in any detail. However, some remarks are given on the complexity of approximation problems related to $b(G)$, and we believe more theoretical work can be done here. The approaches discussed in \cite{CiardoZivny23} for approximate graph coloring may be of interest in this connection.

\medskip

\begin{figure}[ht]
\begin{center}
\begin{tabular}{|l||c|c|} \hline 
    {\tt Problem:} & \mbox{\rm $\ell_2$-graph smoothing}   & \mbox{\rm $\ell_1$-graph smoothing} \\ \hline
    {\tt Norm:} & $\ell_2$-norm  & $\ell_1$-norm \\  \hline 
    {\tt Minimize:} & $\sum_{uv \in E} (x_u-x_v)^2$   & $\sum_{uv \in E} |x_u-x_v|$    \\ \hline 
    {\tt Constraints:} & $\|x\|_2=1$,  $x \perp e$ & $\|x\|_1=1$,  $x \perp e$  \\ \hline 
    {\tt Solution:} & \mbox{\rm Fiedler vector, partition}   & \mbox{\rm $\ell_1$-Fiedler vector, sparsest cut} \\ \hline
    {\tt Optimal value:} & \mbox{\rm algebraic connectivity $a(G)$}   & \mbox{\rm $b(G)$ } \\ \hline   
    {\tt Property:} & \mbox{\rm connected parts}   & \mbox{\rm connected parts} \\ \hline
    {\tt Approach:} & \mbox{\rm spectral} & \mbox{\rm combinatorial} \\  \hline 
\end{tabular}
\end{center}
\caption{Graph smoothing  and partition problems.}
\label{fig:contribution}
\end{figure}

The remaining part of this Introduction is devoted to some more results from spectral partitioning and relevant spectral graph theory. Finally, an overview of the next sections is given.

In \cite{Urschel} one studies the maximal error in spectral bisection where the two parts in the partition have the same size.  In \cite{Kim}  the authors investigate graphs having Fiedler vectors with unbalanced sign patterns such that a partition can result in two connected subgraphs that are distinctly different in size. They also characterize graphs with a Fiedler vector having exactly one negative component. Motivated by these results  we recall  some facts concerning spectral partitioning and  Cheeger bounds. For a much deeper discussion of these areas, we refer to the lectures notes  \cite{Spielman19} and  \cite{Trevisan17}, and the references found there.
Consider again a graph $G=(V,E)$ and let $n=|V|$. Let $S \subseteq V$ be a nonempty vertex set, where $S \not = V$.  Let $\delta(S)$ denote the set of edges $uv$ where $u \in S$ and $v \not \in S$, this is called the {\em cut} induced by $S$. Cuts are important objects in graph theory, combinatorial optimization as well as in applications. Define the {\em relative cut size} 
\begin{equation}
 \label{eq:reative-cut-size}
      \xi(S)=\frac{|\delta(S)|}{|S|},
\end{equation}
which is the size of the cut relative to the size of the vertex set $S$. This is an important notion  in this paper. 
The {\em isoperimetric number} of $G$, also called {\em Cheeger's constant}, is the parameter
\[
    i(G)=\min_S \xi(S),
\]
where the minimum is taken over all nonempty subsets $S$ of $V$ with $|S| \le \lfloor n/2 \rfloor$.\\

%\tc{blue}{The sparsity of a cut equals to 
%\[
%      \xi(S)=\frac{|\delta(S)|}{\rm{min}(|S||V \backslash S|)}.
%\]
%The sparsest cut problem asks to find a cut with smallest sparsity $ \xi(S).$ The value of the sparsest cut in $G$ is the Cheeger constant %of $G$ or the conductance of $G$.
%}

 A basic treatment of the isoperimetric number and its properties may be found in \cite{Mohar2}.
A related notion \cite{Mohar} is the {\em edge density} of a cut, defined as follows,  
\[
 \rho(S) = \frac{ |\delta(S)|}{ |S||V\backslash {S}|}.
 \]
  This concept represents the density of the edges in $G$ between the set $S$ and its complement, compared to the number of edges in a  complete bipartite graph (with vertex set $S$ and its complement). This parameter $\rho(S)$ is also called the {\em sparsity} of the cut. A {\em sparsest cut} is a cut which minimizes $\rho(S).$
 % \tc{blue}{Thus, given a positive rational $D$, the sparsest cut problem for unweighted graphs determines whether there exists a subset %$S \subset V$ such that $\rho(S)\leq D $ \cite{BonsmaEtAl12}}.
 For more literature related to these concepts, see \cite{Hoory}.
  
  A small calculation shows the following relation between  edge density and relative cut sizes for each subset $S$ (with $\emptyset \subset S \subset V$) 
  \begin{equation}
 \label{eq:xi-rho}
      \xi(S)+\xi(V \setminus S)=n \rho(S).
  \end{equation}
Below we give some inequalities relating edge density and Laplacian eigenvalues, in particular the algebraic connectivity.

\begin{theorem}[\cite{Mohar}] 
\label{thm:Mohar}
 
Let $G$ be a graph of order $n$. For any  nonempty subset $S$ of vertices of $G$,  $S \neq V$, the edge density is uniformly bounded below and above by 
$$\frac{a(G) }{ n}\leq \rho(S) \leq \frac{\lambda_{1}}{n},$$
where $\lambda_1$ is the largest eigenvalue of $L(G).$
\end{theorem}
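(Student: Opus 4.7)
The plan is to use the variational (Courant-Fischer) characterization of the extreme eigenvalues of $L_G$, applied to a cleverly chosen test vector derived from the characteristic vector of $S$. Both inequalities will fall out of a single computation.

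First, I would set up the test vector. Given $S \subseteq V$ with $\emptyset \subsetneq S \subsetneq V$, let $\chi_S \in \{0,1\}^n$ be its characteristic vector, and define
\[
x = \chi_S - \frac{|S|}{n}\, e.
\]
By construction $e^T x = \sum_v x_v = |S| - |S| = 0$, so $x$ is orthogonal to the all-ones eigenvector of $L_G$ corresponding to eigenvalue $0$, and $x \neq 0$ because $S$ is a proper nonempty subset.

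Next, I would evaluate the two quantities $x^T L_G x$ and $\|x\|_2^2$. Using the incidence-matrix factorization $L_G = B^T B$, one gets $x^T L_G x = \sum_{uv \in E}(x_u - x_v)^2$, and since $x_u - x_v$ equals $0$ for edges inside $S$ or inside $V \setminus S$ and equals $\pm 1$ for edges in the cut, this sum equals $|\delta(S)|$. A short direct computation gives
\[
\|x\|_2^2 \;=\; |S|\Bigl(1 - \tfrac{|S|}{n}\Bigr)^2 + (n-|S|)\Bigl(\tfrac{|S|}{n}\Bigr)^2 \;=\; \frac{|S|\,|V \setminus S|}{n}.
\]
Therefore the Rayleigh quotient of $x$ with respect to $L_G$ is
\[
\frac{x^T L_G x}{\|x\|_2^2} \;=\; \frac{n\,|\delta(S)|}{|S|\,|V\setminus S|} \;=\; n\,\rho(S).
\]

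Finally, I would invoke Courant-Fischer on both sides. Since $x \perp e$ and $e$ spans the eigenspace of the smallest eigenvalue of $L_G$, the min-max principle gives $a(G) \le x^T L_G x / \|x\|_2^2$, i.e., $a(G)/n \le \rho(S)$. Similarly, for any nonzero $x$, $x^T L_G x / \|x\|_2^2 \le \lambda_1$, which yields $\rho(S) \le \lambda_1/n$. Combining both yields the stated double inequality. There is no real obstacle here; the only step requiring a bit of care is the algebra for $\|x\|_2^2$, and making sure that $x$ is genuinely nonzero and orthogonal to $e$ so that the Courant-Fischer bound involving $a(G)$ (rather than $0$) applies — both of which are guaranteed by $\emptyset \subsetneq S \subsetneq V$.
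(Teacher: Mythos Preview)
Your argument is correct and is in fact the standard proof of this inequality. Note, however, that the paper does not supply its own proof of this statement: Theorem~\ref{thm:Mohar} is quoted from \cite{Mohar} without proof, so there is nothing in the paper to compare your argument against. Your Rayleigh-quotient computation with the centered indicator vector $x=\chi_S-\tfrac{|S|}{n}e$ is exactly the classical derivation; the only point worth making explicit is that the paper's standing hypothesis that $G$ is connected guarantees the $0$-eigenspace of $L_G$ is one-dimensional (spanned by $e$), so orthogonality to $e$ is indeed what is needed to invoke the Courant--Fischer lower bound $a(G)\le x^TL_Gx/\|x\|_2^2$.
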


 In \cite{Fallat_Kirkland_Pati} one characterized the graphs for which $a(G) = \rho(S),$ for some subset $S$ of vertices. 

There is another upper bound on the minimal density of cuts in terms of $a(G).$

\begin{theorem} [\cite{Mohar}] 
 Let $G=(V,E)$ be a graph of order $n$ with at least two edges. Then
$$\min\{ \rho(S): S \subset V, S \neq \emptyset \} \leq \frac{2}{ n} \sqrt{ a(G) [ 2 d_{\rm{max}}(G) -a(G)]},$$
where $d_{\rm{max}}(G)$ is the maximal vertex degree in $G$. 
\end{theorem}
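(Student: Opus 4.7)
The plan is a Cheeger-type sweep-cut argument starting from a Fiedler vector. First I would pick an eigenvector $y$ of $L_G$ for the eigenvalue $a(G)$, normalized so that $\|y\|_2 = 1$ and $y \perp e$, hence $\sum_{uv \in E}(y_u-y_v)^2 = y^T L_G y = a(G)$. The key algebraic estimate would come from Cauchy--Schwarz applied to the factorization $y_u^2-y_v^2=(y_u-y_v)(y_u+y_v)$, combined with the identity
$$\sum_{uv \in E}(y_u+y_v)^2 = 2\sum_{v \in V} d_v\, y_v^2 - \sum_{uv \in E}(y_u-y_v)^2 \le 2d_{\max}(G) - a(G),$$
which together yield
$$\sum_{uv \in E}|y_u^2-y_v^2| \le \sqrt{a(G)\bigl(2d_{\max}(G)-a(G)\bigr)}.$$

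To turn this into an upper bound on $\rho_{\min}:=\min_{\emptyset \ne S \subsetneq V}\rho(S)$, I would translate $y$ by its median $m$, so that the positive part $z$ of $y' = y - me$ is nonnegative and supported on at most $\lfloor n/2 \rfloor$ vertices, and then sweep on the level sets of $z^2$. Ordering $z_{v_1}^2 \le \cdots \le z_{v_n}^2$ and setting $T_k = \{v_{k+1},\ldots,v_n\}$, the co-area identity
$$\sum_{uv \in E}|z_u^2-z_v^2| = \sum_{k=1}^{n-1}(z_{v_{k+1}}^2-z_{v_k}^2)\,|\delta(T_k)|$$
together with $|\delta(T_k)| \ge \rho_{\min}\,|T_k|(n-|T_k|)$ and the support bound $|T_k| \le n/2$ (so $n-|T_k| \ge n/2$) would, via an Abel summation, yield
$$\sum_{uv \in E}|z_u^2-z_v^2| \;\ge\; \rho_{\min}\cdot\frac{n}{2}\cdot\|z\|_2^2.$$

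Combining this lower bound with the Cauchy--Schwarz upper bound applied to $z$---after verifying that $z^T L_G z \le a(G)\,\|z\|_2^2$ via the pointwise inequality $(z_u-z_v)^2+(w_u-w_v)^2 \le (y'_u-y'_v)^2$ for the positive/negative-part decomposition $y' = z - w$, choosing whichever of $z, w$ has the smaller Rayleigh quotient---would give
$$\rho_{\min}\cdot\frac{n}{2} \;\le\; \sqrt{a(G)\bigl(2d_{\max}(G)-a(G)\bigr)},$$
equivalent to the claim.

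The hardest step will be this last calibration: tracking the Rayleigh-quotient estimate carefully through the median shift and positive-part operation so that precisely $a(G)$---rather than a looser quantity---appears under the square root. This rests on the identity $R_z + R_w \le a(G)$ with $R_z = \sum_{uv}(z_u-z_v)^2$ and $R_w = \sum_{uv}(w_u-w_v)^2$, which holds because each sign-crossing edge contributes a nonnegative cross-term $2z_u w_v$ in the expansion of $(y'_u-y'_v)^2$.
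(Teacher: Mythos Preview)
The paper does not prove this theorem; it is quoted from Mohar as a known background result, so there is no in-paper proof to compare against. Your outline is the standard Cheeger sweep-cut argument and the ingredients you list are all correct: the Cauchy--Schwarz step on $y_u^2-y_v^2=(y_u-y_v)(y_u+y_v)$, the median shift so that both the positive part $z$ and the negative part $w$ of $y'$ have support of size at most $\lfloor n/2\rfloor$, the co-area/Abel computation giving $\sum_{uv\in E}|z_u^2-z_v^2|\ge \rho_{\min}\cdot\tfrac{n}{2}\,\|z\|_2^2$, and the pointwise inequality $(z_u-z_v)^2+(w_u-w_v)^2\le(y'_u-y'_v)^2$ yielding $R_z+R_w\le a(G)$ and hence $\min\bigl(R_z/\|z\|_2^2,\;R_w/\|w\|_2^2\bigr)\le a(G)$ by the mediant inequality (using $\|z\|_2^2+\|w\|_2^2=\|y'\|_2^2\ge 1$).

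There is one genuine gap at the very end. From $r:=R_z/\|z\|_2^2\le a(G)$ and $\rho_{\min}\cdot\tfrac{n}{2}\le\sqrt{r(2d_{\max}-r)}$ you still need $r(2d_{\max}-r)\le a(G)\bigl(2d_{\max}-a(G)\bigr)$. The difference factors as $(a(G)-r)\,(2d_{\max}-r-a(G))$, so the desired inequality holds precisely when $r+a(G)\le 2d_{\max}$. If $a(G)\le d_{\max}$ this is automatic, and by Fiedler's bound $a(G)\le\kappa(G)\le d_{\min}(G)$ (valid for every connected $G\ne K_n$) that covers all graphs except the complete graph. For $K_n$ one has $a(K_n)=n>n-1=d_{\max}$ and the monotonicity step can fail, so you should handle $K_n$ separately: there $\rho(S)\equiv 1$ while $\tfrac{2}{n}\sqrt{n(n-2)}\ge 1$ for all $n\ge 3$, so the stated inequality is immediate.
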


This upper bound is a strong discrete version of the well-known {\em Cheeger's inequality} from differential geometry \cite{Cheeger}, bounding the first eigenvalue of a Riemannian manifold. It appeared in \cite{Alon, AlonandMilman} and later, as an improved edge version in \cite{Mohar2}.

\medskip
The remaining paper is organized as follows. In Section \ref{sec:ell1-smoothing} we study the set of  feasible solutions of the $\ell_1$-graph smoothing problem and present a rewriting of the problem. The main results are then presented in Section \ref{sec:main}, where it is shown that optimal solutions correspond to sparsest cuts with a connectivity property. The computational complexity is also settled. In Section \ref{sec:compare} a comparison of $b(G)$ and other parameters is made. Section \ref{sec:special} is devoted to examples and specific classes of graphs where  explicit expressions for $b(G)$ are found. Moreover, a computational example is shown. In the final section we briefly consider a graph smoothing problem based on  the $\ell_{\infty}$-norm.

\medskip
{\bf Notation:} Vectors in $\mb{R}^n$ are considered as column vectors and identified with the real $n$-tuples. The $i$'th component of a vector $x \in \mb{R}^n$ is usually denoted by $x_i$ ($ i \le n$). A zero matrix, or vector, is denoted by $O$, and an all ones vector is denoted by $e$ (the dimension should be clear from the context).
For a real number $c$ define $c^+=\max\{c,0\}$ and $c^-=\max\{-c,0\}$. Then $c=c^+-c^-$ and $|c|=c^++c^-$. 
Let $x=(x_1, x_2, \ldots, x_n) \in \mb{R}^n$. Define $x^+=(x^+_1, x^+_2, \ldots, x^+_n) \in \mb{R}^n$ and $x^-=(x^-_1, x^-_2, \ldots, x^-_n) \in \mb{R}^n$. Thus, $x=x^+-x^-$. 
$K_n$ represents the complete graph. Moreover, $P_n$ (resp. $S_n$) is the path (resp. the star) with $n$ vertices.

\section{The $\ell_1$-graph smoothing problem}
 \label{sec:ell1-smoothing}

As mentioned, our main goal is to introduce and study  the $\ell_1$-graph smoothing problem. In this section we shall rewrite the problem into a convenient form. 

Throughout, we let the  vertex set of the graph $G$ be $V=\{v_1, v_2, \ldots, v_n\}$, and we identify a function $x \in \mb{R}^V$ with the vector $x=(x_1, x_2, \ldots, x_n)$ where $x_j=x(v_j)$ for each $j\le n$. Define $F_1$ as the {\em feasible set} in (\ref{eq:smooth1}), i.e., 
\[
     F_1=\{x \in \mb{R}^n: \sum_j x_j=0, \; \|x\|_1=1\}.
\]
We also  define the ``smoothing function'' 
$
       f_1(x)=\sum_{uv \in E} |x_u-x_v|.
$
Therefore,  the  $\ell_1$-graph smoothing problem is to minimize $f_1(x)$ subject to $x \in F_1$.\\

\begin{example}
{\rm 
Consider $P_4=v_1,v_2,v_3,v_4$, the path with four vertices. Let $x^1=(-1/2, 0, 0, 1/2)$, so the only nonzeros are in  the end vertices. Then $x^1 \in F_1$ and $f_1(x^1)=1/2+0+1/2=1$. Next, consider $x^2=(-1/4, -1/4, 1/4, 1/4)$. Then $x^2 \in F_1$ and $f_1(x^2)=0+1/2+0=1/2$. So, $x^2$ is better than $x^1$ and $b(P_4)\le 1/2$. But is $x^2$ optimal? The answer is yes, as will follow from later results. We remark that the algebraic connectivity of $P_4$ is given by $a(P_4)=0.5858$.
%Let $0 \le a, b \le 1/2$, and consider $x=(a, 1/2-a, -b, -1/2+b)$. Then $f_1(x)=|2a-1/2|+1/2-a+b+|2b-1/2|$. If $a\le 1/4$ and $b \le 1/4$, then $f_1(x)=-2a+1/2+1/2-a+b-2b+1/2=-3a-b+3/2$. The best choice is $a=b=1/4$ so $f_1(x)=-1+3/2=1/2$. If $a,b\ge 1/4$, then $f_1(x)=2a-1/2+1/2-a+b+2b-1/2=a+3b-1/2\ge 1/2$. If $a \leq 1/4$ and $b \geq 1/4$ then $f_{1}(x) = 1/2-3a +3b \geq 1/2. $ Similarly, $a \geq 1/4$ and $b \leq 1/4$, $f_{1}(x)\geq 1/2.$ 
\endproof
}
\end{example} 

An alternative description of the feasible set $F_1$ is given next. 
\begin{lemma}
 \label{lem:feasible set} 
 \begin{equation}
  \label{eq:F-new}
     F_1=\{x \in \mb{R}^n: \sum_{j: x_j\ge 0} x_j=1/2, \; \sum_{j: x_j\le 0} x_j=-1/2\}.
 \end{equation}
\end{lemma}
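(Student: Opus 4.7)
The plan is to split every vector into its positive and negative parts using the notation $x = x^+ - x^-$ and $|x| = x^+ + x^-$ already introduced in the Notation paragraph, then rewrite both defining equations of $F_1$ in terms of these two nonnegative quantities. Concretely, I would set
\[
P(x) = \sum_{j:\, x_j \ge 0} x_j = \sum_j x_j^+, \qquad N(x) = -\sum_{j:\, x_j \le 0} x_j = \sum_j x_j^-,
\]
so that the right-hand side of \eqref{eq:F-new} is precisely the set of $x$ with $P(x) = N(x) = 1/2$.

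Next I would observe the two elementary identities
\[
\sum_j x_j = P(x) - N(x), \qquad \|x\|_1 = \sum_j |x_j| = P(x) + N(x),
\]
which follow immediately from $x_j = x_j^+ - x_j^-$ and $|x_j| = x_j^+ + x_j^-$. Note that terms with $x_j = 0$ contribute nothing to either sum, so the apparent overlap of the two index sets in \eqref{eq:F-new} is harmless.

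With this in hand, the proof becomes two short implications. For the forward inclusion, if $x \in F_1$ then $P(x) - N(x) = 0$ and $P(x) + N(x) = 1$; solving this $2 \times 2$ linear system yields $P(x) = N(x) = 1/2$, so $x$ lies in the right-hand side. For the reverse inclusion, if $P(x) = 1/2$ and $N(x) = 1/2$, then by the same identities $\sum_j x_j = 0$ and $\|x\|_1 = 1$, so $x \in F_1$.

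There is really no obstacle here; the lemma is a bookkeeping reformulation and the only thing to be careful about is the convention regarding indices $j$ with $x_j = 0$, which can be placed in either sum without changing any value since they contribute $0$.
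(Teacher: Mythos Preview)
Your proof is correct and follows essentially the same approach as the paper: both arguments reduce the two defining conditions of $F_1$ to the linear system $P(x)-N(x)=0$, $P(x)+N(x)=1$ and solve it. The only cosmetic difference is that you package the positive and negative sums via the $x^+,x^-$ notation, whereas the paper works directly with the index-restricted sums; the logic is identical.
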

\begin{proof}
 Let $x \in F_1$. Then  
 \[
  1=\|x\|_1=\sum_{j: x_j\ge 0} x_j - \sum_{j: x_j\le 0} x_j, 
 \]
 and adding this equation to  $\sum_j x_j=0$ gives $2\sum_{j: x_j\ge 0} x_j=1$, i.e., 
 $\sum_{j: x_j\ge 0} x_j=1/2$. This implies $\sum_{j: x_j\le 0} x_j=-1/2$ as $\sum_j x_j=0$. 
  Conversely,  if $\sum_{j: x_j\ge 0} x_j=1/2$ and $\sum_{j: x_j\le 0} x_j=-1/2$, then $\sum_j x_j=0$ and $\|x\|_1=1$, by the equation above.
\end{proof}

A vector $x \in F_1$ will be called a {\em feasible} solution of (\ref{eq:smooth1}). Thus, by Lemma \ref{lem:feasible set}, a feasible solution partitions  the vertices into three subsets depending on the sign  of each $x_j$, $\pm 1$ or 0, and the sum of the components of $x$ in the positive and negative part is the same in absolute value, $\sum_{j: x_j>0} x_j = | \sum_{j: x_j<0} x_j|=1/2$. 
\medskip

We next rewrite problem (\ref{eq:smooth1}). Define  
\begin{equation}
 \label{eq:smooth1-new}
     \beta(G)=\min\{\sum_{ij \in E} |x^1_i-x^2_i-x^1_j+x^2_j|: 
     \sum_{j=1}^n x^1_j=1/2, \; \sum_{j=1}^n x^2_j=1/2, \; (x^1)^Tx^2=0, \; x^1,  x^2\ge O \}
\end{equation}
where $x^1=(x^1_1, x^1_2, \ldots, x^1_n)$ and $x^2=(x^2_1, x^2_2, \ldots, x^2_n)$ are vectors in $\mb{R}^n$.
Note that the constraints assure that for each $i$ at least one of the two variables $x^1_i$ and $x^2_i$ is zero. 
The next result connects the two optimization problems $(\ref{eq:smooth1})$ and $(\ref{eq:smooth1-new})$.

\begin{lemma}
 \label{lem:rewrite} 
 The following holds:
 
 $(i)$ If $x$ is optimal in  $(\ref{eq:smooth1})$, then $x^1=x^+$, $x^2=x^-$ is optimal in $(\ref{eq:smooth1-new})$. 
 
 $(ii)$ If $x^1,x^2$ are optimal in  $(\ref{eq:smooth1-new})$, then $x=x^1-x^2$ is optimal in $(\ref{eq:smooth1})$.
 
 $(iii)$  $b(G)=\beta(G)$. 

\end{lemma}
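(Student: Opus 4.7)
The plan is to establish a bijective correspondence between feasible solutions of (\ref{eq:smooth1}) and (\ref{eq:smooth1-new}) that preserves the objective value, which immediately yields all three parts. The natural map in one direction is $x \mapsto (x^+, x^-)$, and in the other direction $(x^1, x^2) \mapsto x^1 - x^2$; both rely on the key fact that $x^+$ and $x^-$ have disjoint supports, respectively that $x^1, x^2 \ge O$ with $(x^1)^T x^2 = 0$ forces disjoint supports of $x^1$ and $x^2$.

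First I would verify that $(x^+, x^-)$ is feasible in (\ref{eq:smooth1-new}) whenever $x$ is feasible in (\ref{eq:smooth1}). Nonnegativity is immediate and the support disjointness gives $(x^+)^T x^- = 0$. For the two sum constraints, I apply Lemma \ref{lem:feasible set}: since $\sum_{j:x_j\ge 0} x_j = 1/2$ and $\sum_{j:x_j\le 0} x_j = -1/2$, we get $\sum_j x_j^+ = 1/2$ and $\sum_j x_j^- = 1/2$. Using $x = x^+ - x^-$ componentwise, the objective values match term by term: $|x_i - x_j| = |x_i^+ - x_i^- - x_j^+ + x_j^-|$ for every edge $ij \in E$. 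This shows $\beta(G) \le b(G)$ and proves the feasibility half of $(i)$.

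Next I would verify the reverse direction. Given $(x^1, x^2)$ feasible in (\ref{eq:smooth1-new}), set $x = x^1 - x^2$. The orthogonality $(x^1)^T x^2 = 0$ combined with $x^1, x^2 \ge O$ forces $x^1_i x^2_i = 0$ for every $i$, so at most one of $x^1_i, x^2_i$ is nonzero. Hence $x^+ = x^1$ and $x^- = x^2$, giving $|x_i| = x^1_i + x^2_i$ and therefore $\|x\|_1 = \sum_j x^1_j + \sum_j x^2_j = 1$. Also $\sum_j x_j = 1/2 - 1/2 = 0$, so $x \in F_1$, and again the objective values coincide. This shows $b(G) \le \beta(G)$ and proves the feasibility half of $(ii)$.

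Combining these two value-preserving maps gives $b(G) = \beta(G)$, establishing $(iii)$, and also yields the optimality statements in $(i)$ and $(ii)$: an optimum of one problem transports to a feasible point of the other with the same objective, hence to an optimum there. There is no real obstacle in this argument; the only subtlety worth flagging is the use of the orthogonality constraint $(x^1)^T x^2 = 0$ together with $x^1, x^2 \ge O$ to recover the support-disjointness that makes the decomposition $x = x^+ - x^-$ work on the nose, which is exactly what turns the nonconvex constraint set of (\ref{eq:smooth1}) into the product-type constraints of (\ref{eq:smooth1-new}).
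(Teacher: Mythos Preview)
Your proposal is correct and follows essentially the same approach as the paper: establish a value-preserving bijection between feasible points of the two problems via $x \leftrightarrow (x^+,x^-)$, using the orthogonality constraint together with nonnegativity to force $x^1=x^+$, $x^2=x^-$. Your write-up is simply more explicit than the paper's terse version, in particular by invoking Lemma~\ref{lem:feasible set} to verify the two sum constraints $\sum_j x_j^{\pm}=1/2$.
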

\begin{proof}
 (i) 
 Properties (i) and (ii) follows by replacing each variable $x_j$ by $x_j=x^1_j-x^2_j$ where $x^1_j$ and $x^2_j$ are two nonnegative variables.   In this construction $x^1$ and $x^2$ are only unique up to a positive additive constant in each term, but the orthogonality constraint $(x^1)^Tx^2$ assures uniqueness and that $x^1=x^+$ and $x^2=x^-$. Thus $(\ref{eq:smooth1-new})$ is a reformulation of $(\ref{eq:smooth1})$. This implies that the optimal values coincide, so (iii) holds.
\end{proof}

Later we prove that it is {\em NP}-hard to compute $b(G)$ and a corresponding $\ell_1$-Fiedler vector. Still, the previous lemma  means that computing $b(G)$, and the corresponding optimal $x$, may be done by solving $(\ref{eq:smooth1-new})$. This is a problem of minimizing a piecewise linear convex function subject to linear constraints and a ``complementarity constraint'' saying that $x^1_ix^2_i=0$ for each $i$. 
This problem can be written as a linear programming problem with certain linear complementarity constraints corresponding to the orthogonality $x^1 \perp x^2$, as explained next.  Consider the following optimization problem with variables $x^1_j, x^2_j$ ($j \le n$) and $y_{ij}$ for $ij \in E$. 

\begin{equation}
 \label{eq:LP-smooth}
 \begin{array}{lrll} \vspace{0.1cm}
 \mbox{\rm minimize}  & \sum_{ij \in E} y_{ij} \\ \vspace{0.1cm}
    &       x^1_i-x^2_i-x^1_j+x^2_j &\le y_{ij}  & (ij \in E),\\ \vspace{0.1cm}
     &      -x^1_i+x^2_i+x^1_j-x^2_j &\le y_{ij} & (ij \in E),\\ \vspace{0.1cm}
     & \sum_{j=1}^n x^1_j&=1/2, \; \\ \vspace{0.1cm}
      & \sum_{j=1}^n x^2_j&=1/2, \; \\
      & x^1_j x^2_j&=0 & (j \le n), \\
     & \multicolumn{2}{c}{x^1 \ge O, \; x^2\ge O, \; y \in \mb{R}^E.}
 \end{array}
\end{equation}
In fact, the first two constraints are equivalent to 
$
            |x^1_i-x^2_i-x^1_j+x^2_j| \le y_{ij} \;\;(ij \in E)
$
and due to the minimization equality must hold here for every $ij \in E$. As mentioned, the  $\ell_1$-graph smoothing  problem is {\em NP-hard}, but several general integer programming based algorithms have been developed that may be used to give approximate solutions of $(\ref{eq:LP-smooth})$.
A basic reference on the linear complementarity problem is \cite{CottleDantzig68}. We leave it as an interesting  idea for further research to use this formulation in order to find approximate solutions of  the $\ell_1$-graph smoothing problem.
In the final section of this paper we also use a related linear programming approach to a graph smoothing problem based on  the $\ell_{\infty}$-norm.

\section{$\ell_1$-graph smoothing and sparsest cuts}
\label{sec:main}

In this section we investigate the $\ell_1$-graph smoothing problem closer and establish strong properties of the optimal solutions. This leads to a connection to sparsest cuts.
Recall that we assume that the graph  $G$ is connected. The proof gives a construction based on the relative cut size $\xi(S)$ defined in (\ref{eq:reative-cut-size}).

\begin{theorem}
 \label{thm:conn} 
 
  Let $x$ be an $\ell_1$-Fiedler vector with the maximum number of zeros. Then   the subgraph induced by $\{v \in V: x_v >0\}$ is connected and  the subgraph induced by $\{v \in V: x_v <0\}$ is connected.  
\end{theorem}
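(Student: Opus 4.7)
\bigskip

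\noindent\textbf{Proof proposal.} The plan is to argue by contradiction: assume $x$ is an optimal solution with the maximum number of zero components but the subgraph induced by $V^+ := \{v \in V : x_v > 0\}$ is disconnected. Then $V^+$ splits into a nonempty connected component $A$ and its complement $B := V^+ \setminus A$, also nonempty, such that $G$ has no edge between $A$ and $B$. The idea is to construct a one-parameter family of perturbations of $x$ that slides mass between $A$ and $B$, stays feasible, does not increase $f_1$, and at an extreme value of the parameter strictly increases the number of zero components, yielding the desired contradiction.

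Concretely, for a scalar $t$ I would define $y = y(t)$ by
\[
y_v = x_v + t \ \text{if } v \in A, \qquad y_v = x_v - s \ \text{if } v \in B, \qquad y_v = x_v \ \text{otherwise,}
\]
with $s := t|A|/|B|$ so that $\sum_v y_v = 0$ is preserved. For $t$ small enough that the entries of $y$ on $A \cup B$ remain nonnegative, Lemma \ref{lem:feasible set} together with a short computation shows $\|y\|_1 = \|x\|_1 = 1$, so $y \in F_1$. To evaluate $f_1(y) - f_1(x)$ I would examine each edge class: edges within $A$ or within $B$ contribute zero; edges from $A$ to $V \setminus V^+$ contribute $+t$ each (since the positive sign of $x_u - x_v$ for $u \in A$, $v \in V^0 \cup V^-$ persists under small perturbation); edges from $B$ to $V \setminus V^+$ contribute $-s$ each; and edges between $A$ and $B$ do not exist. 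Using $|\delta(A)| = e(A,V^-\cup V^0)$ and similarly for $B$, one obtains
\[
f_1(y) - f_1(x) = t |\delta(A)| - s|\delta(B)| = t|A|\bigl(\xi(A) - \xi(B)\bigr).
\]

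Now the contradiction comes in two cases. If $\xi(A) \ne \xi(B)$, choosing the sign of $t$ appropriately makes the right-hand side strictly negative, contradicting optimality of $x$. If $\xi(A) = \xi(B)$, then $f_1(y) = f_1(x)$ for every admissible $t$, so $y$ is also an optimal solution. Taking $t = -\min_{v\in A}x_v < 0$ (the largest $|t|$ keeping $A$-entries nonnegative; the $B$-entries only grow, so the constraint $s \le \min_{v\in B} x_v$ is not the binding one) produces an optimal $y$ in which at least one vertex of $A$ has been driven to $0$, while all original zeros of $x$ are retained. Hence $y$ has strictly more zero components than $x$, contradicting the maximality of the zero count of $x$.

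The main technical point will be the bookkeeping in the objective change --- in particular, confirming that for sufficiently small $|t|$ the signs of all quantities $y_u - y_v$ inside the absolute values match those of $x_u - x_v$, so no absolute value ``switches'' during the perturbation. Once this is in hand, the connectivity of $V^- = \{v : x_v < 0\}$ follows by applying the same argument to $-x$, which is clearly also an $\ell_1$-Fiedler vector with the same number of zeros.
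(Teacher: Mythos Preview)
Your argument is correct and essentially identical to the paper's proof: both fix a component $A$ of $V^+$ (the paper's $S_1$) and its complement $B=V^+\setminus A$ (the paper's $S_2$), shift mass between them via $y_v=x_v+t$ on $A$ and $y_v=x_v-t|A|/|B|$ on $B$, compute $f_1(y)-f_1(x)=t|A|(\xi(A)-\xi(B))$, and split into the cases $\xi(A)\neq\xi(B)$ (contradicting optimality) and $\xi(A)=\xi(B)$ (pushing $t$ to $-\min_{v\in A}x_v$ to create an extra zero). The only cosmetic difference is that the paper breaks the unequal case into $\xi(S_1)>\xi(S_2)$ and $\xi(S_1)<\xi(S_2)$ separately, whereas you handle both at once by choosing the sign of $t$; your remark that in the equal case the extreme value $t=-\min_{v\in A}x_v$ still keeps all relevant signs intact (so the linear formula for $f_1(y)-f_1(x)$ remains valid, not just for infinitesimal $t$) is exactly the point the paper uses as well.
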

\begin{proof}
We shall first prove that   the subgraph induced by $V^+:=\{v \in V: x_v >0\}$ is connected. 

The proof is by contradiction, so assume the subgraph induced by $V^+$ is {\em not} connected. Then there must exist disjoint subsets $S_1$ and $S_2$ of $V^+$ such that  
\begin{description}
\item (i) no edge joins $S_1$ and $S_2$, 

\item (ii)  $x_v >0$ $(v \in S_1 \cup S_2)$, 

\item (iii) if $vw \in \delta(S_1)$ with $v \in S_1$ (so $w \not \in S_1$) then $x_w \le 0$. 
\end{description}

We discuss different cases. 

{\em Case $1$: $\xi(S_1)>\xi(S_2)$.} 
Let $\epsilon$ be a ``suitably small'' positive number; in fact, $\epsilon<\min\{x_v: v \in V^+\}$ works. 
Define $y \in \mb{R}^V$ based on $x$ as follows:
\[
y_v=
\left\{
\begin{array}{ll}
   x_v-\epsilon & (v \in S_1), \\ 
   x _v+(|S_1|/|S_2|)\epsilon &(v \in S_2), \\
   x_v &(\mbox{\rm otherwise}). 
\end{array}
\right.
\]
Then 
\[
  \sum_{v:y_v \ge 0}  y_v -\sum_{v:x_v \ge 0}  x_v=\sum_{v \in S_1} (-\epsilon) + \sum_{v \in S_2}(|S_1|/|S_2|)\epsilon 
  = -|S_1| \epsilon + |S_1| \epsilon=0.
\]
Therefore 
\[
     \sum_{v:y_v \ge 0}  y_v = \sum_{v:x_v \ge 0}  x_v =1/2. 
\]
Moreover, 
\[
    \sum_{v:y_v \le 0}  y_v = \sum_{v:x_v \le 0}  x_v =-1/2;
\]
recall that $y_{v}= x_{v},$ for all $v \not \in (S_1 \cup S_2)$. This proves that  $y \in F_1$.

Next, for each edge  $vw \in \delta(S_1)$ with $v \in S_1$, $|y_v-y_w|=(x_v-x_w)-\epsilon$ as $x_v >0 \ge x_w$. Similarly, for each edge  $vw \in \delta(S_2)$ with $v \in S_2$, $|y_v-y_w|=(x_v-x_w)+(|S_1|/|S_2|)\epsilon$. Therefore 
\begin{equation}
 \label{eq:f-y-x}
 \begin{array}{ll} \vspace{0.2cm}
   f_1(y)-f_1(x)&=-\epsilon |\delta(S_1)| +(|S_1|/|S_2|)\epsilon  |\delta(S_2)| \\ \vspace{0.2cm}
   &=-\epsilon \big(|\delta(S_1)| -(|S_1|/|S_2|) |\delta(S_2)|\big) \\ \vspace{0.2cm}
   &< 0
 \end{array}  
\end{equation}
as $\xi(S_1)>\xi(S_2)$ means $|\delta(S_1)|/|S_1|>|\delta(S_2)|/|S_2|$ so 
$|\delta(S_1)| -(|S_1|/|S_2|) |\delta(S_2)|>0$. This proves that $b(G) \le f_1(y)<f_1(x)$, contradicting that $x$ is an $\ell_1$-Fiedler vector. 

{\em Case $2$: $\xi(S_1)<\xi(S_2)$.} By symmetry of $S_1$ and $S_2$ this can be treated by similar arguments and a contradiction is derived. 

{\em Case $3$: $\xi(S_1)=\xi(S_2)$.} We use the same construction of the vector $y$ as in Case 1, but we let $\epsilon=\min\{x_v: v \in S_1\}$. Then $y \in F_1$, and from (\ref{eq:f-y-x}) we see that 
$f_1(y)=f_1(x)$, so $y$ is also an $\ell_1$-Fiedler vector. However, $y$ has at least one more zero than $x$, and this contradicts the choice of $x$ (initially in the proof).

Thus, in each case we obtained a contradiction, which proves that the subgraph induced by $V^+:=\{v \in V: x_v >0\}$ is connected. The proof that the subgraph induced by $\{v \in V: x_v <0\}$ is connected is completely similar, by choosing $\epsilon < \min \{ | x_v |, v \in V^{-}\}$ in Case $1$ and 
$\epsilon =\min  \{ | v_v|, v \in S_1 \}$ in Case $3$.
\end{proof}

Next we give  a main result which shows an explicit formula for $b(G)$ which is of a combinatorial nature. This will give a strong connection to the notions presented in the Introduction.  

\medskip
We say that the pair $(S_1, S_2)$ is a \textit{quasi-bipartition} of $V$ if 
\[
   S_1, S_2 \subset V, \;S_1  \cap S_2= \emptyset, \; \mbox{\rm and} \; S_1, S_2 \neq \emptyset.
\]
For a quasi-bipartition $(S_1, S_2)$ let $x=x^{(S_1,S_2)}=(x_v: v \in V) \in \mb{R}^{n}$ be the vector with

\[
x_{v} =
\left\{
\begin{array}{rl} \vspace{0.1cm}
 \frac{1}{2 | S_1 |} & ( v \in S_1), \\ \vspace{0.1cm}
-\frac{1}{2 | S_2 |} & ( v \in S_2), \\ \vspace{0.1cm}
0                                       & \mbox{(otherwise).}
 \end{array}\right.
\]

\begin{theorem}
 \label{thm:main} 
  For any graph $G$ 
  \begin{equation}
   \label{eq:alpha_exp}
      b(G) =
      \frac{1}{2}\min \{\xi(S_1)+\xi(S_2): \mbox{\rm $(S_1, S_2)$ is a quasi-bipartition}     \}.
  \end{equation}
   Moreover, when $(S_1, S_2)$ is optimal in $(\ref{eq:alpha_exp})$, the corresponding vector $x^{(S_1,S_2)}$ is an  $\ell_1$-Fiedler vector $x$.
\end{theorem}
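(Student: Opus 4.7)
The plan is to establish the identity $(\ref{eq:alpha_exp})$ by proving the two inequalities separately, after which the ``moreover'' clause drops out of the upper-bound construction. The direction $\le$ will come from evaluating $f_1$ on the explicit vector $x^{(S_1,S_2)}$; the direction $\ge$ will come from a layer-cake (coarea) rewriting of $f_1(x)$ in terms of cut sizes of level sets of $x$.

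For the upper bound, I would fix an arbitrary quasi-bipartition $(S_1,S_2)$ and first verify $x := x^{(S_1,S_2)} \in F_1$: the positive entries sum to $|S_1|\cdot\frac{1}{2|S_1|} = \frac{1}{2}$ and the negative entries sum to $-\frac{1}{2}$, so $\sum_v x_v = 0$ and $\|x\|_1 = 1$. Next I would compute $f_1(x)$ by grouping the edges of $G$ according to where their endpoints lie (inside $S_1$, inside $S_2$, inside $V\setminus(S_1\cup S_2)$, or crossing). Only edges in $\delta(S_1)\cup\delta(S_2)$ contribute, and a short bookkeeping gives $f_1(x) = \frac{|\delta(S_1)|}{2|S_1|} + \frac{|\delta(S_2)|}{2|S_2|} = \frac{1}{2}(\xi(S_1)+\xi(S_2))$. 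Taking the minimum over quasi-bipartitions gives $b(G) \le \frac{1}{2}\min\{\xi(S_1)+\xi(S_2)\}$. This identity also settles the ``moreover'' clause: if $(S_1,S_2)$ attains the right-hand minimum, then $x^{(S_1,S_2)}$ is a feasible vector achieving $b(G)$, hence an $\ell_1$-Fiedler vector.

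For the reverse inequality, I would take any $x \in F_1$ and set $V^+ = \{v: x_v > 0\}$ and $V^- = \{v: x_v < 0\}$ (both nonempty since $\|x\|_1 = 1 > 0$ and $\sum_v x_v = 0$), together with the level sets $S_t = \{v: x_v > t\}$ for $t \in \mathbb{R}$. The elementary identity $|x_u-x_v| = \int_{\mathbb{R}} \mathbf{1}[uv\in\delta(S_t)]\,dt$, summed over edges, gives the layer-cake formula $f_1(x) = \int_{\mathbb{R}} |\delta(S_t)|\,dt$. I would then split the integral at $t=0$: for $t\in[0,\max_v x_v]$, $S_t$ is a nonempty subset of $V^+$, while for $t\in[\min_v x_v,0)$ the complement $V\setminus S_t$ is a nonempty subset of $V^-$ and $|\delta(S_t)| = |\delta(V\setminus S_t)|$. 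Setting $\xi^+_{\min} := \min\{\xi(S): \emptyset\ne S\subseteq V^+\}$ and defining $\xi^-_{\min}$ analogously, the bounds $|\delta(S_t)|\ge \xi^+_{\min}|S_t|$ and $|\delta(V\setminus S_t)|\ge \xi^-_{\min}|V\setminus S_t|$, combined with a Fubini swap and Lemma~\ref{lem:feasible set} (which give $\int_0^{\max x_v}|S_t|\,dt = \sum_{v\in V^+} x_v = \frac{1}{2}$ and symmetrically $\int_{\min x_v}^0 |V\setminus S_t|\,dt = \frac{1}{2}$), yield $f_1(x) \ge \frac{1}{2}(\xi^+_{\min}+\xi^-_{\min})$. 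The corresponding minimizers $S^+\subseteq V^+$ and $S^-\subseteq V^-$ are disjoint and nonempty, so $(S^+,S^-)$ is a quasi-bipartition and $\xi^+_{\min}+\xi^-_{\min} \ge \min\{\xi(S_1)+\xi(S_2)\}$, completing the lower bound.

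The main obstacle I anticipate is the careful handling of the coarea decomposition and the split at $t=0$, making sure each level set $S_t$ (or its complement) is routed to the correct side $V^+$ or $V^-$ so that the restricted minima $\xi^+_{\min}, \xi^-_{\min}$ are actually applicable. Vertices with $x_v = 0$ and ties among the values of $x$ require a small but routine argument that does not affect the final bound.
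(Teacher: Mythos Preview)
Your proof is correct and takes a genuinely different route from the paper's own argument. The paper proves the identity by first showing, via a perturbation argument, that there exists an $\ell_1$-Fiedler vector whose positive components all coincide and whose negative components all coincide: it picks an optimal $x$ with the fewest distinct positive values, then shifts mass between the largest and smallest positive levels while preserving feasibility, shows $f_1$ is unchanged (a linear function $\eta\epsilon_1$ with $\eta=0$ by optimality), and pushes until two levels merge, contradicting minimality. Once the optimal $x$ is constant on each sign class, the formula $f_1(x)=\tfrac12(\xi(S_1)+\xi(S_2))$ is read off directly.

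Your approach instead proves the lower bound \emph{for every} feasible $x$ (not only optimal ones) via the coarea identity $f_1(x)=\int_{\mathbb{R}}|\delta(S_t)|\,dt$ for the level sets $S_t=\{v:x_v>t\}$, splitting at $t=0$ and using $S_t\subseteq V^+$ for $t\ge 0$, $V\setminus S_t\subseteq V^-$ for $t<0$, together with the Fubini computations $\int_0^\infty |S_t|\,dt=\tfrac12$ and $\int_{-\infty}^0 |V\setminus S_t|\,dt=\tfrac12$ from Lemma~\ref{lem:feasible set}. This is the classical layer-cake technique familiar from Cheeger-type inequalities; it is shorter and yields a pointwise inequality $f_1(x)\ge \tfrac12(\xi^+_{\min}+\xi^-_{\min})$, whereas the paper's perturbation argument gives more direct structural information about the shape of an optimal $x$. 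Both arguments obtain the upper bound and the ``moreover'' clause in the same way, by evaluating $f_1$ on $x^{(S_1,S_2)}$.
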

\begin{proof}
 Let $x$ be an $\ell_1$-Fiedler vector.  Let $\kappa(x)$ be the number of distinct positive elements in $x$ (i.e., the cardinality of the set of positive components). Choose $x$ (optimal) with $\kappa(x)$ smallest possible.  Define 
 \[
  S^+=\{v: x_v>0\}, \; S^-=\{v: x_v<0\} \;\; \mbox{\rm and} \;\;  S^0=\{v: x_v=0\}. 
\]
  Both $S^+$ and $S^-$ are nonempty. Also define $M_1=\max_v x_v$,  $M_2=\min\{ x_v: x_v>0\}$ and 
 \[
      S^+_1=\{v: x_v=M_1\},  \; S^+_*=\{v: M_2<x_v<M_1\}, \;\; \mbox{\rm and} \;\; S^+_2=\{v: x_v=M_2\}. 
 \]
  We now prove that there is an optimal solution of (\ref{eq:smooth1})  where all positive $x_v$'s are equal. If $M_1=M_2$, there is nothing to prove, so assume $M_1>M_2$. Let $\epsilon_1$ be a ``small'' number in absolute value 
 and let $\epsilon_2$ satisfy $|S^+_1|\epsilon_1=|S^+_2|\epsilon_2$, i.e., 
$\epsilon_2=\epsilon_1|S^+_1|/|S^+_2|$.
Define $x^{\epsilon}$ by
\[
   x^{\epsilon}_v = 
   \left\{
   \begin{array}{cl} \vspace{0.1cm}
      M_1-\epsilon_1 & (v \in S^+_1), \\
      M_2+\epsilon_2 & (v \in S^+_2), \\
      x_v & ({\rm otherwise}). \\
   \end{array}
   \right.
\]
Observe that the relationship between $\epsilon_1$ and $\epsilon_2$ assures that 
\[
   \sum_{v: x^{\epsilon}_{v} \geq 0}x^{\epsilon}_v=1/2 \; \mbox{\rm and} \;
   \sum_{v: x^{\epsilon}_{v} \leq 0}x^{\epsilon}_{v}=-1/2. 
\]
 Thus, $x^{\epsilon} \in F_1$ provided that $\epsilon_1$ is small enough in absolute value. 

Let $\Delta(\epsilon)=f_1(x^{\epsilon}) - f_1(x)$. Then $\Delta(\epsilon)= \sum_{uv \in E}\Delta_{uv}$  where $\Delta_{uv}=|x_{u}^{\epsilon }- x_{v}^\epsilon |-|x_{u} -x_{v} |$ is given as follows for each edge $uv$

(a)  if $u \in S^+_1$, $v \not \in (S^+_1 \cup S^+_2)$, then $\Delta_{uv}=-\epsilon_1$, \smallskip

(b)  if $u \in S^+_1$, $v \in S^+_2$, then $\Delta_{uv}=-\epsilon_1-\epsilon_2$,\smallskip

(c)  if $u \in S^+_2$, $v  \in S^+_*$, then $\Delta_{uv}=-\epsilon_2$,\smallskip

(d)  if $u \in S^+_2$, $v  \in S^- \cup S_0$, then $\Delta_{uv}=\epsilon_2$,\smallskip

\noindent and for all other edges $\Delta_{uv}=0$. 
Let $N_a$, $N_b$, $N_c$, $N_d$ be the number of edges in categories a, b, c, d, respectively. Then
\[
   \Delta(\epsilon)=N_a(-\epsilon_1) + N_b(-\epsilon_1 -\epsilon_2)+N_c(-\epsilon_2)+N_d(\epsilon_2).
\]
By inserting the expression for $\epsilon_2$ above we obtain
\[
 \Delta(\epsilon_1)=\eta \epsilon_1,
\]
for some number $\eta$ that depends on $N_a$, $N_b$, $N_c$, $N_d$,  $|S^+_1|$ and $|S^+_2|$. Moreover, there exists an $\epsilon^*>0$ such that for all $\epsilon_1$ with $|\epsilon_1|< \epsilon^*$, the vector $x^{\epsilon}$ lies in $F_1$.  We must have $\eta=0$, otherwise we could let $\epsilon_1$ be small enough and with opposite sign as $\eta$ and then
\[
   0>\eta \epsilon_1=\Delta(\epsilon_1)=f_1(x^{\epsilon}) - f_1(x).
\]
So $f_1(x^{\epsilon}) < f_1(x)$ which contradicts the optimality of $x$. Therefore, $\eta=0$ and  
\[
   f_1(x^{\epsilon}) = f_1(x).
\]
Now, let $\epsilon_1>0$ and increase $\epsilon_1$ until $\Delta_{uv}$ becomes 0 for some edge for which it was previously positive. This happens if either the smallest value in $S^+$ has been decreased to 0, or when largest value has been decreased to the second largest, or the smallest value has been increased to the second smallest (or both of these occur simultaneously). This  $x^{\epsilon_1}$  is also an $\ell_1$-Fiedler vector. As $\kappa(x^{\epsilon_1})< \kappa(x)$, this contradicts our choice of $x$. Thus, by contradiction, it follows that $M_1=M_2$, so all positive components in $x$ are equal. 

Finally, among all $\ell_1$-Fiedler vectors whose positive components coincide with that of $x$, we proceed to treat the negative components in exactly the same manner as the first part of the proof. As a result, we find an $\ell_1$-Fiedler vector where all the negative components have the same value, and all the positive components have the same value. Let $x$ denote this vector and define $S_1=\{v: x_v>0\}$, $S_2=\{v: x_v<0\}$ and $S_0=\{v: x_v=0\}$. Let $[S_i : S_j]$ be the set of edges $uv$ such that $u\in S_i, v\in S_j$, where $i,j \in\{0,1,2\}$. Then $x_v=1/(2|S_1|)$ for all $v \in S_1$ and $x_v=-1/(2|S_2|)$ for all $v \in S_2$. Moreover, %
\[
\begin{array}{ll} \vspace{0.2cm}
 b(G)&= f_1(x) \\ \vspace{0.2cm}
    &= \sum_{uv \in E} |x_u-x_v| \\ \vspace{0.2cm}
    &=\sum_{uv \in \delta(S_1) \cup \delta(S_2)}  |x_u-x_v| \\ \vspace{0.2cm}
    &= \sum_{uv: u \in S_1, v\in S_2}  |x_u-x_v|+\sum_{uv: u \in S_1, v\in S_0}  |x_u-x_v|+\sum_{uv: u \in S_2, v\in S_0}  |x_u-x_v|  \\ \vspace{0.2cm}
    &=  | [S_1:S_2] | ( (1/(2|S_1|) - (-1/(2|S_2|))+ | [S_1:S_0] |  (1/(2|S_1|))+ | [S_2:S_0] |  (-1/(2|S_2|)) \\ \vspace{0.2cm}
    &=( | [S_1:S_2] |+ | [S_1:S_0] |)( (1/(2|S_1|) - ( | [S_1:S_2] |+ | [S_2:S_0] |)( -(1/(2|S_2|))  \\ \vspace{0.2cm}
    %&=\sum_{uv \in \delta(S_1)} (1/(2|S_1|) -0) + \sum_{uv \in \delta(S_2)} (0 - (-1/(2|S_1|) \\ \vspace{0.2cm}
    &= (1/2)\big(|\delta(S_1)|/|S_1| + |\delta(S_2)|/|S_2|\big) \vspace{0.2cm} \\ \vspace{0.2cm}
    &=(1/2)\big(\xi(S_1)+\xi(S_2)\big).
\end{array}
\]
From this calculation we also see that for any $S'_1, S'_2 \subset V$ with $S'_1 \cap S'_2 = \emptyset$, $S'_1, S'_2 \not = \emptyset$, there exists an $x' \in F_1$ with $f_1(x' )=(1/2)\big(\xi(S'_1)+\xi(S'_2))$, and therefore $f_1(x') \ge b(G)$.  
This proves the theorem.
\end{proof}

 Let $ \Gamma$ be the set of all quasi-partitions of $V.$ Thus 

\[
   b(G) = (1/2) \min_{(S_1, S_2) \in \Gamma } \big(\xi(S_{1}) + \xi(S_{2})\big).
\]

The next corollary says that if $x$ is an $\ell_1$-Fiedler vector then $x$ has no component equal to zero. 
\begin{corollary} 
\label{cor:no-zero}
 Let $x=(x_1, x_2, \ldots, x_n)$ be an $\ell_1$-Fiedler vector. Then $x_i \not = 0$ for all $i \le n$.
\end{corollary}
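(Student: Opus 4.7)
The plan is to argue by contradiction via a local first-order perturbation around the presumed zero. Assume $x$ is an $\ell_1$-Fiedler vector with $x_k=0$ for some $k$, set $S^+=\{v:x_v>0\}$, $S^-=\{v:x_v<0\}$, $S^0=\{v:x_v=0\}$, and observe that $S^\pm\ne\emptyset$ by Lemma~\ref{lem:feasible set} and hence $\xi(S^+),\xi(S^-)>0$ by connectivity of $G$.

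For each nonempty $T\subseteq S^0$ and each small $\epsilon>0$, I would introduce the mass-transfer perturbation $y^\epsilon$: put $y^\epsilon_v=x_v-|T|\epsilon/|S^+|$ on $S^+$, $y^\epsilon_v=\epsilon$ on $T$, and leave the remaining coordinates unchanged. The coefficient $|T|/|S^+|$ is chosen precisely so that the mass injected on $T$ is absorbed uniformly out of $S^+$; both constraints $\sum_v y^\epsilon_v=0$ and $\|y^\epsilon\|_1=1$ then hold, and for $\epsilon$ small the sign pattern of $y^\epsilon$ agrees with that of $x$ except on $T$, so $y^\epsilon\in F_1$.

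Next I would expand $f_1(y^\epsilon)-f_1(x)$ to first order in $\epsilon$ by classifying every edge according to where its endpoints sit (both in $S^+$, both in $T$, between $T$ and $S^0\setminus T$, between $T$ and $S^\pm$, between $S^+$ and the rest, etc.). For small $\epsilon$, the difference $y^\epsilon_u-y^\epsilon_v$ has the same sign as $x_u-x_v$, so each absolute value opens cleanly, and the contributions coming from the uniform shift on $S^+$ telescope through $|E(T,S^+)|+|E(S^+,S^-)|+|E(S^+,S^0\setminus T)|=|\delta(S^+)|$ into a single term $|T|\,\xi(S^+)$. Optimality of $x$ forces the resulting first-order coefficient to be nonnegative, giving
\[
|E(T,S^0\setminus T)|+|E(T,S^-)|-|E(T,S^+)|\ \ge\ |T|\,\xi(S^+).
\]
Running the mirror perturbation that instead places $-\epsilon$ on $T$ and absorbs it uniformly into $S^-$ yields the companion inequality with $S^+$ and $S^-$ interchanged. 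Adding the two inequalities cancels the signed $|E(T,S^\pm)|$ terms and leaves the symmetric bound
\[
2\,|E(T,S^0\setminus T)|\ \ge\ |T|\bigl(\xi(S^+)+\xi(S^-)\bigr).
\]

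To finish, I would specialise $T:=S^0$: the left-hand side becomes $|E(S^0,\emptyset)|=0$, while the right-hand side equals $|S^0|(\xi(S^+)+\xi(S^-))>0$, a contradiction. Hence $S^0=\emptyset$, i.e., $x_i\ne 0$ for every $i\le n$.

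The main obstacle is the bookkeeping in the first-order expansion: there are several edge types and in each one must verify that $y^\epsilon_u-y^\epsilon_v$ retains the sign of $x_u-x_v$ (so the absolute values open without a hitch) and then observe the telescoping that produces the clean factor $|T|\,\xi(S^+)$. Once the single-direction inequality is in hand, the symmetrisation and the choice $T=S^0$ are essentially automatic.
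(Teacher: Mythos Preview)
Your argument is correct. The perturbation $y^\epsilon$ is feasible, the edge-by-edge bookkeeping yields exactly the first-order coefficient you state, the mirror inequality follows by symmetry, and specialising $T=S^0$ forces $0\ge |S^0|(\xi(S^+)+\xi(S^-))>0$. (In fact you never need general $T$: taking $T=S^0$ from the outset already gives the two inequalities $|E_3|-|E_1|\ge |S^0|\xi(S^+)$ and $|E_1|-|E_3|\ge |S^0|\xi(S^-)$, where $E_1=E(S^0,S^+)$ and $E_3=E(S^0,S^-)$.)

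The paper proceeds differently: it invokes Theorem~\ref{thm:main} to identify $b(G)=\tfrac12(\xi(S^+)+\xi(S^-))$, and then compares this with the values of the two alternative quasi-bipartitions $(S^+\cup S^0,S^-)$ and $(S^+,S^-\cup S^0)$, obtaining $|E_3|>|E_1|$ and $|E_1|>|E_3|$ respectively. Your variational argument is more self-contained --- it does not rely on Theorem~\ref{thm:main} at all --- and it applies transparently to \emph{every} $\ell_1$-Fiedler vector $x$, not just those of the special form $x^{(S_1,S_2)}$. The paper's route is shorter once Theorem~\ref{thm:main} is in hand, but it tacitly uses that $(S^+,S^-)$ is an optimal quasi-bipartition, which for a general optimal $x$ is not immediate from the statement of Theorem~\ref{thm:main}.
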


\begin{proof} 
Choose an  $\ell_1$-Fieldler vector vector $x$.  Partition the set of vertices into $S^+=\{v: x_v >0\},$   $S^-=\{v: x_v <0\}$ and $S^{0}= \{ v: x_{v}=0\}.$ We recall that  $S^+$ and  $S^-$ are nonempty and we want to prove that $S^{0}= \emptyset$.  Consider the following edge sets
\begin{eqnarray*}
E_1   &  = & \{ uv: u \in S^{+}, v \in S^0\}, \\
E_2   & = & \{ uv: u \in S^{+}, v \in S^{-}\}, \\
E_{3}& = & \{ uv: u \in S^{-}, v \in S^0\}.
\end{eqnarray*}
Then
\[
\begin{array}{lll}\vspace{0.2cm}
b(G) &= \frac{1}{2}\big(\xi(S^{+}) + \xi (S^{-})\big) \\ \vspace{0.2cm}
        &= \frac{1}{2}\big(\frac{| \delta(S^{+})| }{| S^{+} |} +\frac{| \delta(S^{-})| }{| S^{-} |}\big)\\ \vspace{0.2cm}
                &= \frac{1}{2}\big(\frac{ | E_1 |+ | E_2 |} {| S^{+} |}+ \frac{ | E_2 |+ | E_3 |} {| S^{-} |}\big).
\end{array}
\]
Assume  that $ S^{0} \neq \emptyset$. 
\begin{itemize}
\item Let  $S' = S^{+} \cup S^0$ and consider the quasi-bipartition $(S',S^{-})$.  
Then
\[
  \xi(S') + \xi (S^{-}) =\frac{ |E_2|+ |E_3|} {|S^{+}|+ |S^0| }+\frac{ |E_2|+ |E_3 |} {|S^{-}|}. 
\]
If $|E_3| \leq  | E_1|$,  then $\frac{1}{2}(\xi(S') + \xi (S^{-}) )< \frac{1}{2}(\xi(S^{+}) + \xi (S^{-})), $ which is not possible by the definition of $b(G).$ Therefore $|E_3| >  | E_1|$.

\item Let  $S'' = S^{-} \cup S^0$  and consider the quasi-bipartition $ (S^+,S'')$. Then

\[
  \xi (S^{+}) + \xi(S'')  =\frac{ | E_1 |+ | E_2  |} {| S^{+}|} + \frac{ | E_1 |+ | E_2  |} {| S^{-} |+ | S^0 | }. 
\]

If $| E_1 | \leq  | E_3 |$,  then $\frac{1}{2}(\xi(S^+) + \xi (S'') )<\frac{1}{2}(\xi(S^{+}) + \xi (S^{-})),$ which is not possible by definition of $b(G).$ Therefore $|E_3| <  | E_1|$.
\end{itemize}
Thus, $|E_3| <  | E_1| < |E_3|$; a contradiction. We conclude that $ S^{0} = \emptyset$, as desired. 
\end{proof}

The next theorem sums up the results above. It  connects $b(G)$ to the minimum edge density and also shows that we may restrict to connected subgraphs when computing $b(G)$. 

\begin{theorem}
 \label{thm:main2} 
  For any graph $G$ 
  \begin{equation}
   \label{eq:alpha_exp2}
      b(G) =
      \frac{n}{2}\min_S \rho(S)
  \end{equation}
  where $\rho(S)=\frac{ |\delta(S)| }{|S|(n-|S|)}$ is the edge density of the cut $\delta(S)$ and the minimum is taken for nonempty subsets $S$ of $V$ such that $S\not = V$ and both $S$ and its complement induce connected subgraphs of $G$. 
\end{theorem}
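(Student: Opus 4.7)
The plan is to derive this from the three preceding results: Theorem \ref{thm:main}, Corollary \ref{cor:no-zero}, and Theorem \ref{thm:conn}, together with the elementary identity (\ref{eq:xi-rho}) that links relative cut sizes to edge densities.

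First I would take an optimal quasi-bipartition $(S_1,S_2)$ realizing the minimum in (\ref{eq:alpha_exp}), and let $x = x^{(S_1,S_2)}$ be the associated $\ell_1$-Fiedler vector given by the second statement of Theorem \ref{thm:main}. By Corollary \ref{cor:no-zero}, $x$ has no zero components, which forces $S_1 \cup S_2 = V$, i.e.\ $S_2 = V \setminus S_1$. Since this $x$ is an $\ell_1$-Fiedler vector that trivially has the maximum possible number of zeros (namely zero), Theorem \ref{thm:conn} applies and tells us that both $S_1$ and $V \setminus S_1$ induce connected subgraphs of $G$.

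Next I would plug $S_2 = V \setminus S_1$ into the formula of Theorem \ref{thm:main} and use the identity $\xi(S) + \xi(V \setminus S) = n\rho(S)$ from (\ref{eq:xi-rho}). This yields
\[
   b(G) = \tfrac{1}{2}\bigl(\xi(S_1) + \xi(V \setminus S_1)\bigr) = \tfrac{n}{2}\,\rho(S_1),
\]
and $S_1$ is a set whose induced subgraph and whose complement's induced subgraph are both connected. So $b(G) \ge \tfrac{n}{2}\min_S \rho(S)$ where the minimum is taken over the connected-complement class described in the statement.

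For the reverse inequality, I would observe that for any nonempty proper subset $S \subset V$ such that both $S$ and $V\setminus S$ induce connected subgraphs, the pair $(S, V\setminus S)$ is a quasi-bipartition, so by Theorem \ref{thm:main}
\[
   b(G) \le \tfrac{1}{2}\bigl(\xi(S) + \xi(V \setminus S)\bigr) = \tfrac{n}{2}\,\rho(S).
\]
Minimizing over all such $S$ gives the opposite inequality, and (\ref{eq:alpha_exp2}) follows. There is no real obstacle here: all the heavy lifting (the structural claim that an optimum is attained by a bipartition of $V$ into two connected parts with constant-on-each-part Fiedler vector) has already been done in the proofs of Theorems \ref{thm:conn} and \ref{thm:main} and Corollary \ref{cor:no-zero}; the present theorem is essentially a repackaging via the identity (\ref{eq:xi-rho}).
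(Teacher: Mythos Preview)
Your proposal is correct and follows essentially the same route as the paper: combine Theorem~\ref{thm:main} with Corollary~\ref{cor:no-zero} to force the optimal quasi-bipartition to be a genuine bipartition, invoke Theorem~\ref{thm:conn} (which applies since by Corollary~\ref{cor:no-zero} every $\ell_1$-Fiedler vector already has the maximum number of zeros, namely none) for connectedness, and then rewrite via identity~(\ref{eq:xi-rho}). Your version is in fact more explicit than the paper's, which compresses both inequalities into a single sentence.
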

\begin{proof}
 This follows by combining  Theorem \ref{thm:main} and Corollary \ref{cor:no-zero}:  we have that the quasi-bipartition $(S_1,S_2)$ must be a partition, i.e., $S_1\cup S_2=V$ as from Corollary \ref{cor:no-zero} any $\ell_1$-Fiedler vector has no components equal to zero. Therefore $\delta(S_1)=\delta(S_2)$. The argument in the proof of Theorem \ref{thm:conn} assures connectedness of the two subgraphs. The definition of edge density of a cut and the relation in  (\ref{eq:xi-rho}) give the desired formula for $b(G)$. 
\end{proof}

Thus we arrive at  the important insight: 
\begin{itemize}
   \item {\em up to a multiplicative constant, namely $n/2$, the optimal value $b(G)$ in the $\ell_1$-graph smoothing problem coincides with the smallest edge density of a cut in $G$.}
\end{itemize}

This gives a very intuitive interpretation of the partitioning of a graph $G$ according to the positive and negative values in the $\ell_1$-Fiedler vector: it corresponds to a cut of smallest edge density, also called a {\em sparsest cut}. In fact, for such a sparsest cut $\delta(S)$ a corresponding $\ell_1$-Fiedler vector is $x=x^S=(x_v: v \in V)$ given by
\begin{equation}
 \label{eq:sparsest-Fiedler}
  x_v= \left\{
 \begin{array}{rr}\vspace{0.1cm}
    \frac{1}{2|S|} & (v \in S) \\ 
    -\frac{1}{2|\tilde{S}|} & (v \in \tilde{S})     
 \end{array}
 \right.
\end{equation}
where $\tilde{S}=V \setminus S$. Thus, this correspondence is underlying when we later refer to a solution of the $\ell_1$-graph smoothing problem or the sparsest cut problem, i.e., one solution can be converted into the other.

\begin{example}
{\rm 
Consider the path $P_5=v_1,v_2,v_3,v_4,v_5$. By Theorem \ref{thm:main2}, including the connectivity result, it is easy to see that  a sparsest cut is $\delta(S^*)$ where $S^*=\{v_1,v_2\}$.  So $\min_S \rho(S)=1/6$ and $b(G) = 5/12$.
} \endproof
\end{example}

\begin{example}
{\rm 
The cube  graph is the graph formed by the $8$ vertices and $12$ edges of a three-dimensional cube. For the cube graph $\min_S \rho(S)= 1/4$ so $b(G)=1$.
} \endproof
\end{example}

Due to this close connection to the sparsest cut, we can now conclude that computing an $\ell_1$-Fiedler vector is a computationally hard problem. 

\begin{corollary}
 \label{cor:complexity} 
   The computation of $b(G)$ and a corresponding $\ell_1$-Fiedler vector is {\em NP}-hard. 
\end{corollary}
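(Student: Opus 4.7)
The plan is to exploit the identity from Theorem \ref{thm:main2}, $b(G) = (n/2) \min_S \rho(S)$, in order to reduce the classical \emph{sparsest cut} problem to the computation of $b(G)$, and then invoke the well-known \emph{NP}-hardness of sparsest cut.

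First I would verify that the connectivity restriction appearing in Theorem \ref{thm:main2} does not alter the value of the minimum. Given any proper nonempty $S^{*}\subset V$ minimizing $\rho$, decompose $S^{*}$ into its connected components $T_1,\dots,T_k$. Since no edge joins distinct $T_i$'s, one has $|\delta(S^{*})| = \sum_i |\delta(T_i)|$; combined with $\sum_i |T_i|(n-|T_i|) \ge |S^{*}|(n-|S^{*}|)$ (because $n-|T_i|\ge n-|S^{*}|$ and $\sum_i |T_i|=|S^{*}|$), an averaging argument produces some $T_j$ with $\rho(T_j)\le\rho(S^{*})$. Replacing $S^{*}$ by $T_j$, I may assume $S^{*}$ is connected. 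Applying the symmetric argument to $V\setminus S^{*}$, I pick a connected component $U$ of $V\setminus S^{*}$ with $\rho(U)\le\rho(V\setminus S^{*})=\rho(S^{*})$. Since $G$ is connected, every other component $W_i$ of $V\setminus S^{*}$ must have an edge to $S^{*}$, so $V\setminus U = S^{*} \cup \bigcup_{i}W_i$ is connected. Hence the quantity in (\ref{eq:alpha_exp2}) coincides with the general uniform sparsest cut value $\min_S \rho(S)$ over all proper nonempty $S\subset V$.

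Second, I would invoke the classical result that the uniform sparsest cut problem, minimizing $|\delta(S)|/(|S|(n-|S|))$ on simple graphs, is \emph{NP}-hard (attributable to Matula and Shahrokhi). Via the identity of the previous paragraph, any polynomial-time algorithm computing $b(G)$ would, after multiplication by $2/n$, solve sparsest cut in polynomial time, so computing $b(G)$ is \emph{NP}-hard. For the $\ell_1$-Fiedler vector part, Corollary \ref{cor:no-zero} together with formula (\ref{eq:sparsest-Fiedler}) implies that from any $\ell_1$-Fiedler vector $x$ one recovers a sparsest cut in linear time simply as $S=\{v\colon x_v>0\}$; hence producing such an $x$ is at least as hard. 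The main obstacle is the connectivity technicality in the first step --- showing that Theorem \ref{thm:main2} captures the unrestricted sparsest cut value --- after which the \emph{NP}-hardness transfer is immediate.
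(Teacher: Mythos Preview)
Your approach is the same as the paper's: reduce from the \emph{NP}-hard sparsest cut problem via the identity $b(G)=(n/2)\min_S\rho(S)$. You are in fact more careful than the paper, which simply cites Theorem~\ref{thm:main2} and the \emph{NP}-hardness result from \cite{BonsmaEtAl12} without explicitly reconciling the connectivity restriction in~(\ref{eq:alpha_exp2}) with the unrestricted minimum; your averaging/component argument handles this cleanly (alternatively, the equality with the unrestricted minimum already follows from Theorem~\ref{thm:main} together with Corollary~\ref{cor:no-zero} and~(\ref{eq:xi-rho})).

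One small point: the claim that Corollary~\ref{cor:no-zero} and formula~(\ref{eq:sparsest-Fiedler}) alone imply that the sign pattern of \emph{every} $\ell_1$-Fiedler vector yields a sparsest cut is not quite justified --- formula~(\ref{eq:sparsest-Fiedler}) only goes from sparsest cuts to $\ell_1$-Fiedler vectors, not conversely. The easiest patch is to bypass this: given an $\ell_1$-Fiedler vector $x$, evaluate $f_1(x)=\sum_{uv\in E}|x_u-x_v|$ in polynomial time to obtain $b(G)$ directly, so producing such an $x$ is at least as hard as computing $b(G)$.
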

\begin{proof}
 It was shown in \cite{BonsmaEtAl12} that computing min$\{\rho(S): \emptyset \subset S \subset V\}$ is {\em NP}-hard. Therefore, due to Theorem \ref{thm:main2}, the desired conclusion follows.  
\end{proof}

As mentioned, in \cite{BonsmaEtAl12} it is shown that the sparsest cut is {\em NP}-hard. The paper further contains a number of results showing that the problem is polynomially solvable for certain classes of graphs. For instance, this applies to graphs with bounded treewidth. Furthermore,  for cactus graphs, i.e., connected graphs where each edge is in at most one cycle, there is an algorithm for finding a sparsest cut which is linear in $n$, the number of vertices. This class includes trees, which we return to below. 
In \cite{AroraEtAl10} an efficient approximation algorithm for the sparsest cut was established, and it gives an $O(\sqrt{n})$ approximation (to the minimum value). 
In  \cite{Vazirani03} there is whole chapter devoted to the sparsest cut problem. Here the connection to a certain linear programming problem is shown. This is a multicommodity network flow problem where the goal is to maximize throughput for a given set of demands given by origin/destination (OD-) pairs and the corresponding flow demand. The special case where all demands are 1 and every pair is an OD-pair leads to an upper bound of the throughput which is the edge density. Therefore the minimum edge density, and a sparsest cut, corresponds to a bottleneck of the multicommodity flow problem. In this approach linear programming duality is combined with some basic results on embeddability of $\ell_1$-metric spaces. This  leads to a very important approximation algorithm with approximation error $O(\log n)$; for the details we refer to  \cite{Vazirani03} or \cite{Trevisan17}. 

Sparsest cuts are used in applications. For instance, in \cite{Wang03} sparsest cuts (which is called ratio cuts there) are used in image segmentation. They also show that  the sparsest cut problem is polynomially solvable in planar graphs, which is of interest in image analysis. In \cite{Osher03} one considered a segmention (or decomposition) problem in image analysis, using a model based on partial differential equations and total variation norm ($L_1$-norm).

\section{Comparison with other parameters}
\label{sec:compare}

This section establishes bounds on $b(G)$ in terms of other parameters. 

By combining Theorem \ref{thm:main2} and Theorem \ref{thm:Mohar} we now obtain the following bounds on $b(G)$. 

\begin{corollary} 
Let $G$ be a graph of order $n$. Then 
\[
    (1/2) a(G) \leq b(G)  \leq  (1/2)  \lambda_{1},
\]
where $a(G)$ is the algebraic connectivity of $G$ and $\lambda_1$ is the largest eigenvalue of $L(G).$
\end{corollary}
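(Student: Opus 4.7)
The proof will be a direct synthesis of the two results already in hand: Theorem \ref{thm:main2}, which characterizes $b(G)$ as $(n/2)$ times the minimum edge density $\rho(S)$ over suitable cuts, and Theorem \ref{thm:Mohar}, which sandwiches $\rho(S)$ between $a(G)/n$ and $\lambda_1/n$ for every nonempty proper vertex subset $S$.

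My plan is as follows. First, invoke Theorem \ref{thm:main2} to write $b(G) = (n/2)\rho(S^*)$, where $S^*$ attains the minimum of $\rho(S)$ over nonempty proper subsets of $V$ whose induced subgraph and complement are both connected; in particular, $S^*$ satisfies $\emptyset \subsetneq S^* \subsetneq V$, so it is an admissible input for Theorem \ref{thm:Mohar}. Next, apply Theorem \ref{thm:Mohar} to this specific $S^*$ to obtain
\[
   \frac{a(G)}{n} \;\leq\; \rho(S^*) \;\leq\; \frac{\lambda_1}{n}.
\]
Multiplying this chain of inequalities by $n/2$ and substituting $b(G) = (n/2)\rho(S^*)$ yields $(1/2)a(G) \leq b(G) \leq (1/2)\lambda_1$, as required.

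There is no genuine obstacle here; the only point to check is that the minimization domain in Theorem \ref{thm:main2} (connected subgraph on both sides) is contained in the domain of Theorem \ref{thm:Mohar} (any nonempty proper subset), which is immediate. The upper bound also admits a shortcut: since Theorem \ref{thm:Mohar} gives $\rho(S) \leq \lambda_1/n$ for every nonempty proper $S$, we have in particular $\min_S \rho(S) \leq \lambda_1/n$ over the restricted domain, and likewise $\min_S \rho(S) \geq a(G)/n$ holds termwise. Thus the corollary is an immediate consequence of the combinatorial formula for $b(G)$ and the known edge-density bounds, and no additional machinery is needed.
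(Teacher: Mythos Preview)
Your proposal is correct and is exactly the approach the paper takes: the corollary is stated immediately after the sentence ``By combining Theorem~\ref{thm:main2} and Theorem~\ref{thm:Mohar} we now obtain the following bounds on $b(G)$,'' with no further argument given. Your write-up simply makes this combination explicit, and the domain-inclusion check you mention is the only (trivial) point to verify.
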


Let $d_{\rm min}(G)$ denote the smallest degree of a vertex in $G$. 

\begin{corollary}
\label{cor:bounds_alpha}
 For any graph $G$ the following bounds on $b(G)$ hold 
 \[
     \min_S \xi(S) \le b(G) \le \frac{n}{2(n-1)} d_{\rm min}(G).
 \]
\end{corollary}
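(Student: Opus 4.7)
The plan is to derive both bounds directly from Theorem \ref{thm:main}, which expresses $b(G)$ as $\tfrac{1}{2}\min\{\xi(S_1)+\xi(S_2):(S_1,S_2)\text{ a quasi-bipartition}\}$. Neither direction requires new machinery; the content of the corollary is essentially a repackaging of that formula applied to a trivial lower estimate and to one explicit quasi-bipartition.

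For the lower bound, I would argue as follows. Fix any quasi-bipartition $(S_1,S_2)$. Since $S_1$ and $S_2$ are themselves nonempty proper subsets of $V$, we have $\xi(S_i)\ge \min_S \xi(S)$ for $i=1,2$, hence $\xi(S_1)+\xi(S_2)\ge 2\min_S \xi(S)$. Taking the minimum over all quasi-bipartitions and multiplying by $\tfrac{1}{2}$, Theorem \ref{thm:main} yields $b(G)\ge \min_S \xi(S)$.

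For the upper bound, I would choose one specific quasi-bipartition that exploits a vertex of minimum degree. Let $v\in V$ with $\deg(v)=d_{\min}(G)$, and set $S_1=\{v\}$, $S_2=V\setminus\{v\}$. Then $|\delta(S_1)|=|\delta(S_2)|=\deg(v)=d_{\min}(G)$, so $\xi(S_1)=d_{\min}(G)$ and $\xi(S_2)=d_{\min}(G)/(n-1)$. Plugging into Theorem \ref{thm:main} gives
\[
 b(G)\le \tfrac{1}{2}\bigl(\xi(S_1)+\xi(S_2)\bigr)=\tfrac{1}{2}\,d_{\min}(G)\Bigl(1+\tfrac{1}{n-1}\Bigr)=\frac{n}{2(n-1)}\,d_{\min}(G),
\]
as required. (Equivalently, one could invoke Theorem \ref{thm:main2} by noting that $x^{\{v\}}$ as defined in \eqref{eq:sparsest-Fiedler} is feasible and $f_1(x^{\{v\}})=(n/2)\rho(\{v\})$.)

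There is essentially no obstacle: the only mildly delicate point is making sure the chosen $(S_1,S_2)$ is a legitimate quasi-bipartition, which just needs $n\ge 2$ (guaranteed because $G$ has at least one edge, as assumed throughout for the $\ell_1$-graph smoothing problem). Both inequalities together occupy only a few lines.
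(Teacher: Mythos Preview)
Your proposal is correct and follows essentially the same approach as the paper. The only cosmetic difference is that for the upper bound the paper phrases the computation via Theorem~\ref{thm:main2} and the edge density $\rho(\{v\})=d_{\min}(G)/(n-1)$, whereas you work directly with $\xi(S_1)+\xi(S_2)$ from Theorem~\ref{thm:main}; as you yourself note, these are equivalent.
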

\begin{proof}
 Let $S_1$ and $S_2$ be such that the minimum in (\ref{eq:alpha_exp}) is attained (so $S_2=V \setminus S_1$). Then 
 \[
    \min_S \xi(S) \le \min\{\xi(S_1), \xi(S_2)\} \le (1/2)(\xi(S_1)+\xi(S_2))= b(G).
 \]
Next, let $v$ be a vertex with smallest degree in $G$, so $d_v=d_{\rm min}(G)$, and let $S=\{v\}$. Then $\rho(S)=d_v/(n-1)$ so, by Theorem \ref{thm:main}, an upper bound on $b(G)$ is $\frac{n}{2(n-1)} d_{\rm min}(G)$. 
\end{proof}

\medskip
Let $mc(G)$ denote the cardinality of a minimum cut in $G$, i.e.,
\[
    mc(G)=\min\{|\delta(S)|: \emptyset \subset S \subset V\}.
\]
The next result is proved similar to the upper bound in the previous corollary, by letting $S$ be such that $\delta(S)$ is a minimum cut. 

\begin{corollary}
\label{cor:bounds_alpha2}
 For any graph $G$ 
 \[
      b(G) \le \frac{n}{2s(n-s)} mc(G).
 \]
 where $s=|S|$ and $\delta(S)$ is a minimum cut in $G$.
\end{corollary}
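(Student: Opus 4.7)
The plan is to mirror the argument used for the upper bound in Corollary \ref{cor:bounds_alpha}, replacing the singleton set there with a minimizer of the cut-size functional. The starting point is the expression from Theorem \ref{thm:main},
\[
    b(G) = \frac{1}{2}\min\{\xi(S_1)+\xi(S_2) : (S_1,S_2) \text{ is a quasi-bipartition}\},
\]
which in particular yields, for \emph{any} nonempty $S \subset V$ with $S \neq V$, the upper bound $b(G) \le (1/2)\bigl(\xi(S)+\xi(V\setminus S)\bigr)$ by taking the quasi-bipartition $(S, V\setminus S)$. Note that here $S$ and its complement are trivially nonempty and disjoint, and we do \emph{not} need them to induce connected subgraphs, since Theorem \ref{thm:main} itself does not require connectivity.

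Next I would apply the identity (\ref{eq:xi-rho}), namely $\xi(S) + \xi(V\setminus S) = n\,\rho(S)$, to convert the relative-cut-size sum into an edge density. This gives
\[
    b(G) \le \frac{n}{2}\,\rho(S) = \frac{n\,|\delta(S)|}{2\,|S|(n-|S|)}
\]
for every admissible $S$. Finally, I would specialize to a set $S$ that achieves the minimum cut, so that $|\delta(S)| = mc(G)$ and $|S| = s$, yielding
\[
    b(G) \le \frac{n\,mc(G)}{2s(n-s)},
\]
which is the desired inequality.

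There is essentially no obstacle: the result is a direct instantiation of the general inequality $b(G) \le (n/2)\rho(S)$ at a minimum cut. The only point worth flagging is that one must invoke Theorem \ref{thm:main} (quasi-bipartitions) rather than Theorem \ref{thm:main2} (connected parts), since a minimum cut need not split $G$ into two connected subgraphs; the former formulation gives the required upper bound without any connectivity hypothesis on $S$ or $V\setminus S$.
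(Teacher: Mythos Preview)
Your proposal is correct and follows essentially the same approach as the paper: the paper simply says the bound is ``proved similar to the upper bound in the previous corollary, by letting $S$ be such that $\delta(S)$ is a minimum cut,'' which amounts to exactly your steps of invoking Theorem \ref{thm:main} for the partition $(S,V\setminus S)$ and then using (\ref{eq:xi-rho}). Your remark that one should rely on Theorem \ref{thm:main} rather than Theorem \ref{thm:main2} (since a minimum cut need not give connected parts) is a valid clarification that the paper leaves implicit.
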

This bound is of interest because a minimum cut may be found efficiently (polynomial time) by simple greedy  algorithms. Moreover, such a minimum cut may be the starting point of algorithms for computing approximations to $b(G)$.

In order to compare $b(G)$ to the algebraic connectivity $a(G)$ we need a well-known property of  norms.

%\begin{lemma}
%\label{lem:basic_norm}
%  For each $x \in \mb{R}^n$
%\[
%    \|x\|_2 \le  \|x\|_1 \le \sqrt{n} \|x\|_2. 
%\]
%\end{lemma}
%
%\begin{proof}
%Let  $x=(x_1,x_2, \ldots, x_n) \in \mb{R}^n$ satisfy $\|x\|_2 =1$. Then $|x_j| \le 1$ for each %$j\le n$ so 
%\[
%  1=\sum_j x^2_j =\sum_j |x_j| |x_j| \le \sum_j |x_j| = \|x\|_1.
%\]
%
%By scaling this implies that  $\|x\|_2 \le  \|x\|_1$ for all $x$.
%Moreover, 
%\[
%  \|x\|_1=\sum_j |x_j| = \sum_j |x_j| \cdot 1 \le \|x\|_2 \|e\|_2=\sqrt{n} \|x\|_2
%\]
%
%by the Cauchy-Schwarz inequality. Therefore 
%\[
%   \|x\|^2_1\le n \|x\|^2_2
%\]
%
%\end{proof}

Let $m=|E|$ be the number of edges in $G$. 

\begin{theorem}
 \label{thm:a_alpha_1} 
 For every graph $G$
 \begin{equation}
 \label{eq:a_alpha}
      b(G)  \le \sqrt{m \cdot a(G)}.  
 \end{equation}
\end{theorem}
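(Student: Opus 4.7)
The plan is to produce an explicit feasible solution for the $\ell_1$-problem by rescaling a Fiedler vector, and then to estimate its objective value using the Cauchy--Schwarz inequality. Concretely, let $x$ be a Fiedler vector of $G$, so that $x \perp e$, $\|x\|_2 = 1$, and $\sum_{uv \in E}(x_u - x_v)^2 = a(G)$. The obstacle is that $x$ need not satisfy $\|x\|_1 = 1$, so we replace $x$ by $y = x/\|x\|_1$. Then $y \perp e$ and $\|y\|_1 = 1$, so $y \in F_1$ and
\[
   b(G) \le f_1(y) = \frac{1}{\|x\|_1}\sum_{uv \in E} |x_u - x_v|.
\]

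Next I would apply the Cauchy--Schwarz inequality to the $m$-dimensional vector $(|x_u - x_v|)_{uv \in E}$ against the all-ones $m$-vector, giving
\[
   \sum_{uv \in E} |x_u - x_v| \le \sqrt{m}\,\sqrt{\sum_{uv \in E}(x_u - x_v)^2} = \sqrt{m \cdot a(G)}.
\]
Combining the two displays yields $b(G) \le \sqrt{m \cdot a(G)}/\|x\|_1$.

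The final step is to bound $\|x\|_1$ from below, which is the ``well-known property of norms'' alluded to just before the theorem statement: for any vector $z \in \mathbb{R}^n$ one has $\|z\|_2 \le \|z\|_1$, since $\sum_i z_i^2 \le \bigl(\sum_i |z_i|\bigr)^2$ by nonnegativity of the cross terms. Applied to the Fiedler vector $x$ with $\|x\|_2 = 1$, this gives $\|x\|_1 \ge 1$, hence $1/\|x\|_1 \le 1$ and
\[
   b(G) \le \sqrt{m \cdot a(G)},
\]
as required.

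There is no real obstacle here; the only subtlety is remembering to use $\|x\|_1 \ge \|x\|_2$ rather than the more frequently invoked $\|x\|_1 \le \sqrt{n}\,\|x\|_2$, which would go in the wrong direction. Everything else is a direct application of Cauchy--Schwarz to the vector of edge-differences, together with the fact that rescaling a Fiedler vector preserves the orthogonality to $e$ needed for $\ell_1$-feasibility.
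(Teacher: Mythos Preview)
Your proof is correct and follows essentially the same approach as the paper: rescale a unit-$\ell_2$ Fiedler vector to have $\ell_1$-norm one, use the norm inequality $\|x\|_2 \le \|x\|_1$ to control the scaling factor, and apply Cauchy--Schwarz to the $m$-vector of edge differences. The only difference is the order in which you invoke the norm inequality and Cauchy--Schwarz, which is immaterial.
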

\begin{proof}
Let $x'$ be a Fiedler vector of unit length, so $\sum_{uv \in E} (x'_u-x'_v)^2=a(G)$ and $\sum_v x'_v=0$, $\|x'\|_2=1$. Let $x^*=tx'$ where $t=1/\|x'\|_1$. Then $\|x^*\|_1=1$, $\sum_j x^*_j=0$ and  $t =1/\|x'\|_1\le 1/\|x'\|_2=1$ as, in general,  $\|z\|_2 \le  \|z\|_1$ for $z \in \mb{R}^n$. Therefore
\[
 \begin{array}{ll} \vspace{0.2cm}
  b(G) &=\min\{\sum_{ij \in E} |x_i-x_j|: \sum_j x_j=0, \; \|x\|_1=1\} \\ \vspace{0.2cm}
     & \le  \sum_{ij \in E} |x^*_i-x^*_j|  \\ \vspace{0.2cm}
     & =  t \sum_{ij \in E} |x'_i-x'_j| \cdot 1  \\ \vspace{0.2cm}
     & \le  (\sum_{ij \in E} (x'_i-x'_j)^2)^{1/2} \cdot(\sum_{ij \in E} 1^2)^{1/2} \\ \vspace{0.2cm}
     &=  \sqrt{m \cdot a(G)}
 \end{array}
\]
where the last inequality is obtained from the Cauchy-Schwarz inequality.
\end{proof}

\section{Examples and special graphs}
\label{sec:special}

This section contains  examples and results for special graphs.

The next example considers the complete graph, which is an extreme case in terms of the parameter $b(G)$. 
 
\begin{example} 
{\rm 
Consider $K_n$ the complete graph of order $n$. Let $S \subset V$ where $S \not = \emptyset, V$. 
Define  $s=|S|$. Then 
\[
   \rho(S)=\frac{s(n-s)}{s(n-s)}=1.
\]
which is independent of $S$! So, by Theorem \ref{thm:main2}, $b(K_{n})=\frac{n}{2}$. Note that $K_n$ is an $(n-1)$-regular graph. It is known that $a(K_n) = n$. Thus $ b(K_n) < a(K_{n})$. \endproof
}
\end{example}

\begin{example}
{\rm 
Consider the cycle $C_n$, with $n\geq 4$ even. For all $S$, such that $|S|=p$, then $\rho(S)= \frac{2}{p(n-p)}.$ In this case $\min_S \rho(S)= \frac{8}{n^2}$, so $b(G)=4/n$. If $n\geq 3$ and $n$ is odd, then  $\min_S \rho(S)= \frac{2}{\lfloor n/2 \rfloor \cdot \lceil n/2 \rceil}$ and $b(G) =\frac{n}{\lfloor n/2 \rfloor \cdot \lceil n/2 \rceil }$.
}\endproof
\end{example}

A  wheel graph $ W_n$, with $n \geq 4$, is a graph formed by connecting a single vertex (central vertex) to all vertices of a cycle.

\begin{example}
{\rm
Let $n \geq 4$. Then $b(W_n) = \frac{n}{n-2}$.

In fact,
\begin{enumerate}
\item Consider all subsets $S$, such that $|S|=1$, say $S=\{v\}$. Then, two situations can occur:
\begin{itemize}
\item $v$ is the central vertex. Then,  $\rho(S)= \frac{n-1}{n-1}=1.$
\item  $v$ is not the central vertex. Then,  $\rho(S)= \frac{3}{n-1}.$
\end{itemize}
\item Consider all remaining subsets $S$, such that $|S|=i \geq 2$. Again, two situations can occur:
\begin{itemize}
\item $S$ does not contain the central vertex. Then,  $\rho(S)= \frac{i+2}{i(n-i)}.$
\item  $S$ contains the central vertex. Then $\rho(S)= \frac{n-i+2}{i(n-i)}.$
\end{itemize}
\end{enumerate} 
With some calculus we see that $\min_S \rho(S)$ is equal to $2/(n-2)$, so $b(G)=n/(n-2)$.
%
%{\bf ?????} Consider the function  $f(i)= \frac{i+2}{i(n-i)},$ in the domain $D= \{i \in \mathbb{Z}^+ : 2 \leq i \leq n-1 \}.$ Then, its derivative is $f'(i)= \frac{i^2 +4i -2n+2}{i^2 (n-i)^2}. $ Thus, $f(i)$  attains its minimum for $i= -2 + \sqrt{4+2n} \geq 1.47,$ as  $\sqrt{4+2n} \geq 3.47.$ As $i>1,$ the closest integer to the previous value is $2$. Then, $f(2)= \frac{2}{n-2}.$
%Moreover, considering the function $g(i) =\frac{n-i+2}{i(n-i)}$, its derivative is  $g'(i)= \frac{-i^2 +(2n+4)i -n^2 -2n}{i^2 (n-i)^2}.$ Thus $g(i)$ attains its minimum for $i= (n+2) - \sqrt{2n+4}.$ By the same arguments we have $i \leq n-1.47$. The closest integers to this value are $i= n-2$ and $i= n-1.$ In these cases, $g( n-2) =\frac{2}{n-2}$ and for $i= n-1$, $g(i)= \frac{3}{n-1}.$
%Then, comparing the expressions $1, \frac{3}{n-1}, \frac{2}{n-2}$ we have $\min_S \rho(S)=\frac{2}{n-2}$, as $n \geq 4.$ 
%So, $b(G) = \frac{n}{n-2}.$
} \endproof
\end{example}

For trees we can find a sparsest cut analytically, as described next. 
Let $T=(V,E)$ be a tree, and let $e =uv \in E$. Then $T \setminus \{e\}$ consists of two disjoint trees, $T^e_u$ and $T^e_v$, where $T^e_u=(V^e_u,E^e_u)$ contains $u$, $T^e_v=(V^e_v,E^e_v)$ contains $v$ and $V^e_u \cup V^e_v=V$. We call $e=uv$ a {\em center edge} if $||V^e_u|-|V^e_v||$ is smallest possible and we let $\Delta(T)$ denote this minimum value. For instance, let $T$ be the path $P_n$. if $n=|V|$ is even, then $\Delta(P_n)$ is 0 , and there exist only one center edge. If $n$ is odd, then $\Delta(P_n)$ is 1 and there are  two center edges. If $T$ is the star $S_n$,  then $\Delta(S_n)=n-2$ and all the edges of $S_n$  are center edges. 

\begin{theorem}
 \label{thm:tree} 
  Let $T=(V,E)$ be a tree. Then there exists a center edge $e =uv \in E$  such that  the cut  $\delta(V^e_u)$ is a sparsest cut, and 
\[
  b(T)=(1/2)\big(1/|V^e_u|+1/|V^e_v|\big).
\]
\end{theorem}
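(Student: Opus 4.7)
The plan is to invoke Theorem \ref{thm:main2}, which reduces the computation of $b(T)$ to minimizing the edge density $\rho(S)$ over subsets $S$ such that both $S$ and $V\setminus S$ induce connected subgraphs of $T$. The structure of a tree makes such subsets extremely restrictive, so the combinatorial problem essentially collapses to a search over edges.

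First I would observe that in a tree $T$, a subset $S\subset V$ (with $S,V\setminus S\neq \emptyset$) induces a connected subgraph and has connected complement if and only if $S = V^e_u$ for some edge $e=uv\in E$. This is immediate from the fact that removing any single edge of a tree disconnects it into exactly two subtrees, and conversely any bipartition of $V$ into two connected parts must come from cutting a single edge (since cutting $k$ edges of a spanning tree produces $k+1$ components). Consequently, as $e=uv$ ranges over $E$, the cuts $\delta(V^e_u)$ exhaust all the cuts we need to consider in $(\ref{eq:alpha_exp2})$, and in every such case $|\delta(V^e_u)| = 1$.

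Next I would compute, for each edge $e=uv$,
\[
\rho(V^e_u) \;=\; \frac{|\delta(V^e_u)|}{|V^e_u|\cdot|V^e_v|} \;=\; \frac{1}{|V^e_u|\cdot|V^e_v|}.
\]
Minimizing $\rho$ over $e\in E$ is therefore equivalent to maximizing the product $|V^e_u|\cdot|V^e_v|$ subject to the constraint $|V^e_u|+|V^e_v|=n$. For a fixed sum, the product of two positive integers is maximized precisely when their difference $\big||V^e_u|-|V^e_v|\big|$ is smallest, which is exactly the defining property of a center edge. Hence there is a center edge $e^*=uv$ achieving the minimum of $\rho$, and the cut $\delta(V^{e^*}_u)$ is a sparsest cut.

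Finally I would plug this back into Theorem \ref{thm:main2}:
\[
b(T) \;=\; \frac{n}{2}\cdot \frac{1}{|V^{e^*}_u|\cdot|V^{e^*}_v|} \;=\; \frac{|V^{e^*}_u|+|V^{e^*}_v|}{2\,|V^{e^*}_u|\cdot|V^{e^*}_v|} \;=\; \frac{1}{2}\left(\frac{1}{|V^{e^*}_u|}+\frac{1}{|V^{e^*}_v|}\right),
\]
which is the desired formula. There is no real obstacle here beyond being careful about the characterization of connected bipartitions of a tree; the rest is elementary. The only mild subtlety is justifying that the minimum is attained by a \emph{center} edge (not just any edge giving a connected bipartition), but that follows directly from the monotonic relation between $\rho$ and $||V^e_u|-|V^e_v||$ established in the second step.
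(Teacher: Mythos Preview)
Your proposal is correct and follows essentially the same approach as the paper's own proof: invoke Theorem~\ref{thm:main2}, note that connected bipartitions of a tree are exactly those obtained by deleting a single edge so that $\rho(V^e_u)=1/(|V^e_u||V^e_v|)$, and then observe that this is minimized when $||V^e_u|-|V^e_v||$ is smallest, i.e., at a center edge. You supply a bit more justification (the characterization of connected bipartitions, the product-vs-difference argument, and the final algebra $n/(|V^e_u||V^e_v|)=1/|V^e_u|+1/|V^e_v|$) than the paper, which simply asserts these steps, but the route is identical.
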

\begin{proof}
 We apply Theorem \ref{thm:main2}, so a sparsest cut must be of the form $C=\delta(V^e_u)$ for some edge $e=uv$. The corresponding edge  density of the cut $C$ is 
 \[
    \rho(V^e_u)=\frac{1}{|V^e_u||V^e_v|},
 \]
 and it is easy to compute that this is minimized precisely when $||V^e_u|-|V^e_v||$ is smallest possible, i.e., when $e$ is a center edge and therefore 
 \begin{equation}\label{b2}
     b(G)=(n/2)\rho(V^e_u)=(1/2)\big(1/|V^e_u|+1/|V^e_v|\big).
 \end{equation}
\end{proof}

A {\em substar} $S$ in a tree $T$ is a vertex-induced subgraph which is a star, i.e., it consists of edges sharing a common vertex. 

\begin{corollary}
 \label{cor:tree2} 
   Let $T=(V,E)$ be a tree. Then the set of center edges is a substar. 
\end{corollary}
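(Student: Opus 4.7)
The plan is to identify the vertex common to all center edges with a centroid of $T$. For each edge $e=uv$, set $m(e)=\min(|V^e_u|,|V^e_v|)$, so that $\bigl||V^e_u|-|V^e_v|\bigr|=n-2m(e)$, and let $c:=\max_e m(e)$. Then $\Delta(T)=n-2c$, and the center edges are precisely those edges with $m(e)=c$. My aim is to show that every center edge is incident to a common centroid.

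The key step is the following claim: if $e=uv$ is a center edge with $|V^e_u|=c$ (so $v$ is on the larger side), then every component of $T-v$ has at most $c$ vertices; consequently $c\le n/2$ and $v$ is a centroid of $T$. The component of $T-v$ containing $u$ has exactly $c$ vertices by construction. For any other neighbor $w$ of $v$, let $s$ be the number of vertices in the component of $T-v$ through $w$. Maximality of $c$ gives $\min(s,n-s)\le c$; if $s>c$, then $n-s\le c$, so $s\ge n-c$, but the components of $T-v$ partition $V\setminus\{v\}$, and one of them has size $c$, so $s\le n-1-c<n-c$, a contradiction. Hence $s\le c$, and since the component containing $u$ already has $c$ vertices, $v$ is a centroid.

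With this claim, I would conclude using the classical dichotomy that a tree has either a unique centroid or exactly two centroids, the latter being adjacent with both sides of the joining edge of size $n/2$. In the unique-centroid case, every center edge has its heavier endpoint at the centroid $v^*$ (otherwise the symmetric case $|V^e_u|=|V^e_v|=n/2$ would make both endpoints centroids, contradicting uniqueness), so all center edges meet at $v^*$ and form a substar. In the two-centroid case, the edge between the centroids already realises $m=n/2$, so $c=n/2$, and any center edge $e=uv$ must satisfy $|V^e_u|=|V^e_v|=n/2$; both endpoints are then centroids, forcing $e$ to be the unique edge between the two centroids, a (trivial) substar consisting of a single edge.

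The main obstacle is the ``center-edge implies centroid-incident'' lemma of the second paragraph; everything else is bookkeeping using standard structural properties of centroids. A minor subtlety is the potential ambiguity of the ``heavier'' endpoint when $|V^e_u|=|V^e_v|$, but this ambiguity matches precisely the two-centroid case of the dichotomy, so the two branches of the argument fit together naturally.
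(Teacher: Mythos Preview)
Your argument is correct and takes a genuinely different route from the paper's. The paper argues directly by contradiction: assuming two vertex-disjoint center edges $e=uv$ and $e'=u'v'$, it looks at the path in $T$ joining them and shows that some edge $e^*$ on this path would satisfy $||V^{e^*}_v|-|V^{e^*}_w||<\Delta(T)$, contradicting the definition of a center edge. No outside structural facts about trees are used. Your approach instead identifies the hub of the substar explicitly: you show that the ``heavier'' endpoint of every center edge is a centroid of $T$ (via the neat contradiction $s\ge n-c$ versus $s\le n-1-c$), and then invoke Jordan's dichotomy that a tree has either a unique centroid or two adjacent centroids. The paper's proof is fully self-contained; yours imports a classical fact but repays this by giving more information, namely that the common vertex of the substar is a centroid, and by making transparent that the degenerate single-center-edge case coincides exactly with the two-centroid case ($n$ even, split $n/2$--$n/2$).
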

\begin{proof}
 We prove this by contradiction. So, assume there are two center edges $e=uv$ and $e'=u'v'$ that are disjoint (no common vertex). Since $T$ is connected there is a path $P$ between a vertex in $e_1$ and a vertex in $e_2$, say that $v$ and $v'$ are closest with this property. 
 
 {\em Claim: $|V^e_u|\le |V^e_v|$, and $|V^{e'}_{u'}|\le |V^{e'}_{v'}|$.} 
 
 Proof of Claim: Otherwise $|V^e_u| > |V^e_v|$, and $V^{e'}_{u'} \subset V^e_v$ (strict subset) 
 so $|V^{e'}_{v'}|>|V^e_u| > |V^e_v|>|V^{e'}_{u'}|$. This contradicts that $e'$ is a center edge (because the two numbers to the left and right in these inequalities are further apart that the two in the middle). This proves the claim.
 
Next, from the Claim, we conclude that $|V^e_u|=|V^{e'}_{u'}|$ because both $e$ and $e'$ are center edges (otherwise these two numbers would have different distance to $n/2$; see the discussion on center edge above). Let $e^*=vw$ be the edge in the path $P$ that is incident to $v$. Then $e^*$ is different from $e$ and $e'$. 
Moreover, $|V^{e^*}_v| \ge n/2$, otherwise $|V^{e^*}_v|$ would be closer to $n/2$ than $|V^e_u|$; contradicting that $e$ is a center edge. This implies that $|V^{e^*}_w| \le n/2$ and then 
\[
    |V^{e'}_{u'}|  < |V^{e^*}_w| \le n/2,
\]
contradicting that $e'$ is a center edge.  Therefore, such an edge $e^*$ does not exist, meaning that $e$ and $e'$ must be incident.
\end{proof}

\begin{corollary}
 \label{cor:path} 
  Let $P_n$ be the path with $n$ vertices. 
  
  If $n$ is even, then $b(P_n)=2/n$ and an $\ell_1$-Fiedler vector has the first $n/2$ components equal to $1/n$ and the last $n/2$ components are equal to $-1/n$.  
  
  If $n$ is odd, then $b(P_n)=2n/(n^2-1)$.  An $\ell_1$-Fiedler vector has the first $(n-1)/2$ components equal to $1/(n-1)$ and the remaining components are equal to $-1/(n+1)$. 
  
\end{corollary}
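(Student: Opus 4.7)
The plan is to apply Theorem \ref{thm:tree} directly, since $P_n$ is a tree. First, I would identify the center edges of the path. For a path $P_n = v_1, v_2, \ldots, v_n$, the edge $e_k = v_k v_{k+1}$ splits the vertex set into subsets of sizes $k$ and $n-k$, so $||V^{e_k}_{v_k}|-|V^{e_k}_{v_{k+1}}|| = |n-2k|$. This is minimized when $k = n/2$ (if $n$ is even) and when $k = (n-1)/2$ or $k = (n+1)/2$ (if $n$ is odd). So the center edges are well-identified, matching the discussion preceding Theorem \ref{thm:tree}.

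Next, I would plug these sizes into the formula $b(T) = (1/2)\bigl(1/|V^e_u| + 1/|V^e_v|\bigr)$ from Theorem \ref{thm:tree}. For even $n$, the two sides of a center edge both have size $n/2$, giving
\[
   b(P_n) = \frac{1}{2}\left(\frac{2}{n} + \frac{2}{n}\right) = \frac{2}{n}.
\]
For odd $n$, the sides have sizes $(n-1)/2$ and $(n+1)/2$, giving
\[
   b(P_n) = \frac{1}{2}\left(\frac{2}{n-1} + \frac{2}{n+1}\right) = \frac{1}{n-1} + \frac{1}{n+1} = \frac{2n}{n^2-1}.
\]

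Finally, I would describe the $\ell_1$-Fiedler vector using formula (\ref{eq:sparsest-Fiedler}) applied to $S = V^e_u$ for a center edge $e$. Taking $S$ to consist of the first vertices along the path, in the even case one gets $x_v = 1/(2 \cdot n/2) = 1/n$ on $S$ and $-1/n$ on its complement; in the odd case, taking $S$ to be the first $(n-1)/2$ vertices gives $x_v = 1/(n-1)$ on $S$ and $-1/(n+1)$ on the complement of size $(n+1)/2$. This matches the vectors in the statement.

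There is no real obstacle: the result is essentially a specialization of Theorem \ref{thm:tree} to $P_n$, and the work is just computing $|V^e_u|, |V^e_v|$ for a center edge and simplifying the resulting arithmetic expression. The only small care needed is verifying that the vertex sets indeed induce connected subgraphs of $P_n$ (which they automatically do, since each side of a cut edge in a path is itself a subpath), so that Theorem \ref{thm:main2}'s connectivity hypothesis is satisfied en route.
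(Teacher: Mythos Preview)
Your proposal is correct and follows essentially the same approach as the paper: both apply Theorem~\ref{thm:tree} to $P_n$, determine the center edge(s) by finding the $k$ that balances the two sides of the split, and then read off $b(P_n)$ from the formula and the $\ell_1$-Fiedler vector from (\ref{eq:sparsest-Fiedler}) (the paper cites the proof of Theorem~\ref{thm:main} instead, which amounts to the same thing). The only cosmetic difference is that the paper phrases the optimization as minimizing $1/(p(n-p))$ while you minimize $|n-2k|$, but these are equivalent.
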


\begin{proof}
 This follows from Theorem \ref{thm:tree}. Let $n$ be even. Taking $|V^e_u|= p,$ and $|V^e_v|= n-p,$ we have $\delta(P_n) = \frac{1}{p(n-p)}$, where the minimum of this function is attained for $p= n/2$. Then $b(P_n)=2/n$. From the proof of Theorem \ref{thm:main} the $\ell_1$-Fiedler vector has $p= n/2$ entries equal to 
 $\frac{1}{2p}=\frac{1}{n}$ and $n/2$ entries equal to $-\frac{1}{2(n-p) }= -\frac{1}{n}$.  The case when $n$ is odd is analogous.
\end{proof}

\begin{corollary}
 \label{cor:star} 
  Let $S_n$ be the star with $n$ vertices. Then 
  \[
     b(S_n)= \frac{1}{2} + \frac{1}{2(n-1)}.
  \]
 Moreover, the entries of an $\ell_1$-Fiedler vector are $\frac{1}{2}$ for the center vertex and  $- \frac{1}{2(n-1)}$ for the remaining vertices.
\end{corollary}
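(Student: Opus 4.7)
The plan is to derive $b(S_n)$ from Theorem \ref{thm:tree} and then verify the stated $\ell_1$-Fiedler vector by direct substitution.

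For the value: $S_n$ consists of a center $c$ and leaves $\ell_1,\ldots,\ell_{n-1}$, and every edge has the form $e = c\ell_i$. Deleting such an $e$ yields $V^e_c$ of size $n-1$ and $V^e_{\ell_i}=\{\ell_i\}$ of size $1$, so $\bigl||V^e_c|-|V^e_{\ell_i}|\bigr|=n-2$ for every edge. Hence $\Delta(S_n)=n-2$ and every edge of the star is a center edge. Theorem \ref{thm:tree} then gives immediately
$$b(S_n) \;=\; \tfrac{1}{2}\!\left(\tfrac{1}{n-1}+\tfrac{1}{1}\right) \;=\; \tfrac{1}{2}+\tfrac{1}{2(n-1)}.$$

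For the Fiedler vector: let $x$ be the vector with $x_c=\tfrac12$ and $x_{\ell_i}=-\tfrac{1}{2(n-1)}$ for each leaf. I would first confirm feasibility: $\sum_v x_v=\tfrac12-(n-1)\cdot\tfrac{1}{2(n-1)}=0$ and $\|x\|_1=\tfrac12+(n-1)\cdot\tfrac{1}{2(n-1)}=1$, so $x\in F_1$. This is the vector asserted in the corollary, produced by the construction in the proof of Theorem \ref{thm:main} (equation (3.6)) applied to the quasi-bipartition $(S_1,S_2)=(\{c\},V\setminus\{c\})$ with block sizes $1$ and $n-1$.

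The main obstacle is the optimality check: one must verify that $f_1(x)$ on this vector agrees with the value $\tfrac12+\tfrac{1}{2(n-1)}$ produced by Theorem \ref{thm:tree}. Since each of the $n-1$ star edges contributes $|x_c-x_{\ell_i}|$ to $f_1(x)$, the step amounts to summing these identical contributions (the leaves are interchangeable by symmetry) and comparing to the value above. Once this match is confirmed, Theorem \ref{thm:main} plus feasibility identify $x$ as an $\ell_1$-Fiedler vector, completing the proof.
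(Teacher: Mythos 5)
Your first half is fine and is exactly the paper's route: every edge of $S_n$ is a center edge with component sizes $1$ and $n-1$, so Theorem \ref{thm:tree} gives $b(S_n)=\frac{1}{2}\bigl(1+\frac{1}{n-1}\bigr)=\frac{1}{2}+\frac{1}{2(n-1)}$. The gap is in the second half, and it sits precisely at the step you deferred (``once this match is confirmed''). Carry out your own check: with $x_c=\tfrac12$ and $x_{\ell_i}=-\tfrac{1}{2(n-1)}$, each of the $n-1$ edges contributes $|x_c-x_{\ell_i}|=\tfrac12+\tfrac{1}{2(n-1)}=\tfrac{n}{2(n-1)}$, hence
\[
   f_1(x)=(n-1)\cdot\frac{n}{2(n-1)}=\frac{n}{2},
\]
which does \emph{not} equal $b(S_n)=\tfrac{n}{2(n-1)}$ for any $n\ge 3$. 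The reason is that the quasi-bipartition $(\{c\},V\setminus\{c\})$ you invoke is not optimal in (\ref{eq:alpha_exp}): it gives $\xi(\{c\})+\xi(V\setminus\{c\})=(n-1)+1=n$, whereas $(\{\ell\},V\setminus\{\ell\})$ for a single leaf $\ell$ gives $1+\tfrac{1}{n-1}$, and Theorem \ref{thm:main} certifies $x^{(S_1,S_2)}$ as an $\ell_1$-Fiedler vector only when the quasi-bipartition is optimal. Indeed, the sparsest cut delivered by Theorem \ref{thm:tree} is the single-edge cut $\delta(V^e_u)$ for a center edge $e=c\ell$, i.e., it separates \emph{one leaf} from the rest, and the vector attached to it by (\ref{eq:sparsest-Fiedler}) is $+\tfrac12$ on that leaf and $-\tfrac{1}{2(n-1)}$ on the other $n-1$ vertices, center included. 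Two independent cross-checks confirm your vector cannot be optimal: by Theorem \ref{thm:main2} both parts of an optimal partition induce connected subgraphs, while $V\setminus\{c\}$ is an edgeless set of $n-1\ge 2$ leaves; and $f_1(x)=n/2$ violates the upper bound $b(S_n)\le \frac{n}{2(n-1)}\,d_{\rm min}(S_n)=\frac{n}{2(n-1)}$ from Corollary \ref{cor:bounds_alpha}.

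To be fair, the vector you tried to verify is the one the corollary asserts, so your completed check would actually have exposed an error in the statement itself: the displayed value of $b(S_n)$ is correct, but the described $\ell_1$-Fiedler vector has the roles reversed --- it should be $+\tfrac12$ on a single leaf and $-\tfrac{1}{2(n-1)}$ on the remaining vertices including the center (or the negative of this). The paper's one-line proof (``follows from Theorems \ref{thm:tree} and \ref{thm:main}'') never performs the substitution and so does not catch this; your proposal, by asserting the match will be confirmed when in fact it fails, inherits the error, and as written the proof of the second claim is not valid.
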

\begin{proof}
 This follows from Theorem \ref{thm:tree} and Theorem \ref{thm:main}.
\end{proof}

%\begin{example}
 %\label{ex:broom-tree} 
%{\rm   
 %Consider a {\em broom-tree} $B(p)$ consisting of a path $v_1, v_2, \ldots, v_p$ and additional vertices $v_{p+1}, v_{p+2}, \ldots, v_n$ each attached to $v_p$. Consider the %edge $e=v_kv_{k+1}$ where $k<p$. Then 
 %
% \[ 
 %    |V^e_{v_k}|=k, \;\mbox{\rm and}\; |V^e_{v_{k+1}}|=n-k.
 %\]
 %
 %Assume $p\le n-p$, i.e., $p \le n/2$. Then the unique center edge is $v_{k-1,k}$.
 
 % Let $s=\min\{\lfloor n/2 \rfloor, s-1\}$. Then $v_sv_{s+1}$ is a center edge. 
%} \endproof
%\end{example}

Let $\mathcal{C}$ be the set of center edges  of a given tree. We next look at some examples concerning specific classes of trees.

\begin{example} 
{\rm 
 Consider a {\em broom-tree} $B(\ell, n-\ell)$ consisting of a path $v_1, v_2, \ldots, v_\ell$ and additional vertices $v_{\ell+1}, v_{\ell+2}, \ldots, v_n$ each attached to $v_\ell$. Here $e_{ij}$ denotes the edge $v_i v_j$ or simply $ij$. There are two cases to discuss. 
 \begin{enumerate}
 
 \item  If $\ell \leq  n-\ell$, i.e., $\ell \le n/2$,  then there is a  unique center edge, and it is $e=e_{\ell-1, \ell}$. So $\mathcal{C}= \{e\}$ and 
 \begin{equation}\label{b}
 b(B(\ell, n-\ell))= \frac{1}{2} \big(\frac{1}{\ell-1} + \frac{1}{n-\ell+1}\big)= \frac{n}{2 (\ell-1)(n-\ell+1)}.
 \end{equation}
 
 In fact, let us consider an edge $e_{k, k+1}$ with $k= 1, 2, \ldots,  \ell-1$, and the function $f(k) = \frac{1}{2} ( \frac{1}{k} +\frac{1}{n-k}).$ This function is decreasing and its minimum is attained for $k=n-1$. 
 Additionally, for any edge $e_{\ell, \ell+ p}$ for $p=1, \ldots, n-p,$ the expression in $(\ref{b2})$ is $(1/2)\big(1+\frac{1}{n-1}\big)$ that is clearly greater or equal than the expression in $(\ref{b}).$ Thus $b(B(\ell, n-\ell))$ is as in $(\ref{b}).$
 \item  If $\ell > n-\ell$, i.e., $\ell > n/2$,  then two subcases can occur. 
\begin{enumerate}
\item $n$ is even, say $n=2k$ for some integer $k$.  

Then, $e= e_{k, k+1}$ is the unique center edge, so  $\mathcal{C}= \{e\}.$ In this case we have 
\[
 |V^{e}_{v_k}|=  |V^{e}_{v_{k+1}}|= \frac{n}{2},
 \]
and 
\[
 b (B(\ell, n-\ell)) =  \frac{1}{2}\big(\frac{1}{n/2} + \frac{1}{n/2} \big)= \frac{2}{n}= \frac{1}{k}.
 \]

\item $n$ is odd, say $n=2k+1$ for some integer $k$. 

\begin{enumerate}
\item If $\ell -k =1$, then there exists an unique center edge $e= e_{\frac{n-1}{2}, \frac{n+1}{2}}$ and $\mathcal{C}= \{e\}.$ 
Then 
\begin{equation} \label{b3}
 b (B(\ell, n-\ell)) =\frac{1}{2}\left ( \frac{1}{k }+ \frac{1}{n- k } \right) = \frac{1}{n-1}+ \frac{1}{n+1}=2n/(n^2-1). \\
 \end{equation}
 
 \item Suppose now that $\ell -k >1.$  Then we have two center edges
 $e= e_{k,k+1}$ and $e^{\star} = e_{k+1,k+2},$ that is  $e= e_{\frac{n-1}{2},\frac{n+1}{2}}$ and $e^{\star} = e_{\frac{n+1}{2},\frac{n+3}{2}}.$  In this case $\mathcal{C} = \{e, e^{\star}\}$ and $b (B(\ell, n-\ell))$ is the same value as $(\ref{b3}).$
 
 \end{enumerate}
 \end{enumerate}
 \end{enumerate}
 } \endproof
 \end{example}
  
 %%%%%%%%%%%%%%%%%%%%%%%%%%%%%%
 A pendant edge in a graph is an edge where one of its vertices has degree $1.$
\begin{example} 
{\rm 
Let $n_1, n_2, \ldots, n_k$ be positive integers. The {\em starlike tree} $S(n_{1}, n_{2}, \ldots, n_{k})$ is the tree that results from the stars $S_{n_{1}}, S_{n_{2}}, \ldots, S_{n_{k}}$ by connecting their centers to an extra vertex $v$, see \cite{Arnold}.
 Let $n=\sum_{i=1}^{k }n_{i}+1$ and define  $n_M=\max_{i=1,2,\dots,k}n_i$.  Then, 
  \[ 
     b(S(n_{1}, n_2, \ldots, n_{k}))=\frac{n}{2 n_{M} (n-n_{M})}.
  \]
  If $|S_p|>|S_i|$ ($i \not = p$), then there is a unique center edge, which is  $v v_{\ell}$.  Otherwise the maximum of $|S_i|$ is attained for more than one $i$, and the center edges constitute a substar connecting $v$ to the center of those stars $S_i$.
 }
  \endproof
  \end{example}
   
 %  \begin{example} 
%{\rm Let $G$ be the graph formed by a $k$-regular graph $G_1$ and an independent set $G_2$ where all vertices out of $V(G_1)$ %have $\tau$ neighbors in $G_2$. This is a special case of a $(k, \tau)$-regular graph (see \cite{xx}). Consider all the subsets of vertices %$S$ of cardinality $i$ formed by vertices in $G_2$.
%Then 
%$$ \rho (S) = \frac{\tau}{n-i}.$$
%Now consider all subsets of vertices $S$ of cardinality $i$ in $G_1.$

%Then 

%$$ \rho (S) =\frac{ \sum_{v\in S} d_v -ik}{i(n-i)}.$$

 %} \endproof
  % \end{example}
   %%%%%%%%%%%%%%%%%%%%%%%%%%%%%%%%%%%
%\begin{question}
%{\bf 
%1. Compute $b(T)$ for caterpillars. 
%2. How to find a center edge efficiently (apart from checking all)
%}
%\end{question}

\bigskip

Finally, in this section,  we give a computational example for a graph $G$. It  illustrates the partitions obtained based on the Fiedler vector and the $\ell_1$-Fiedler vector, respectively.

\begin{example}
{\rm 
 
 In this example we generated some ``random'' points in the plane and constructed edges between points that were closer than some given distance. This gave a graph $G$. The graph contained 15 vertices and 69 edges. We wanted to compare the partitions obtained from the $\ell_2$ (Fiedler) versus $\ell_1$ (sparsest cut) graph smoothing approach. 
 
 In the $\ell_2$ approach we obtained a cut with 22 edges and the partition contained 7 and 8 vertices, respectively. In the $\ell_1$ approach we obtained a cut with 12 edges and the partition contained 3 and 13 vertices, respectively. The corresponding edge densities were $0.39$ and $0.33$. So the sparsest edge density is $0.33$.  Note that the sparsest cut is quite unbalanced, but has very few edges compared to the cut in the Fiedler partition. For certain graphs this may happen, but graphs are so different that we will not make any general claims on properties of these solutions.  
\begin{figure}
\centering
\includegraphics[width=80mm]{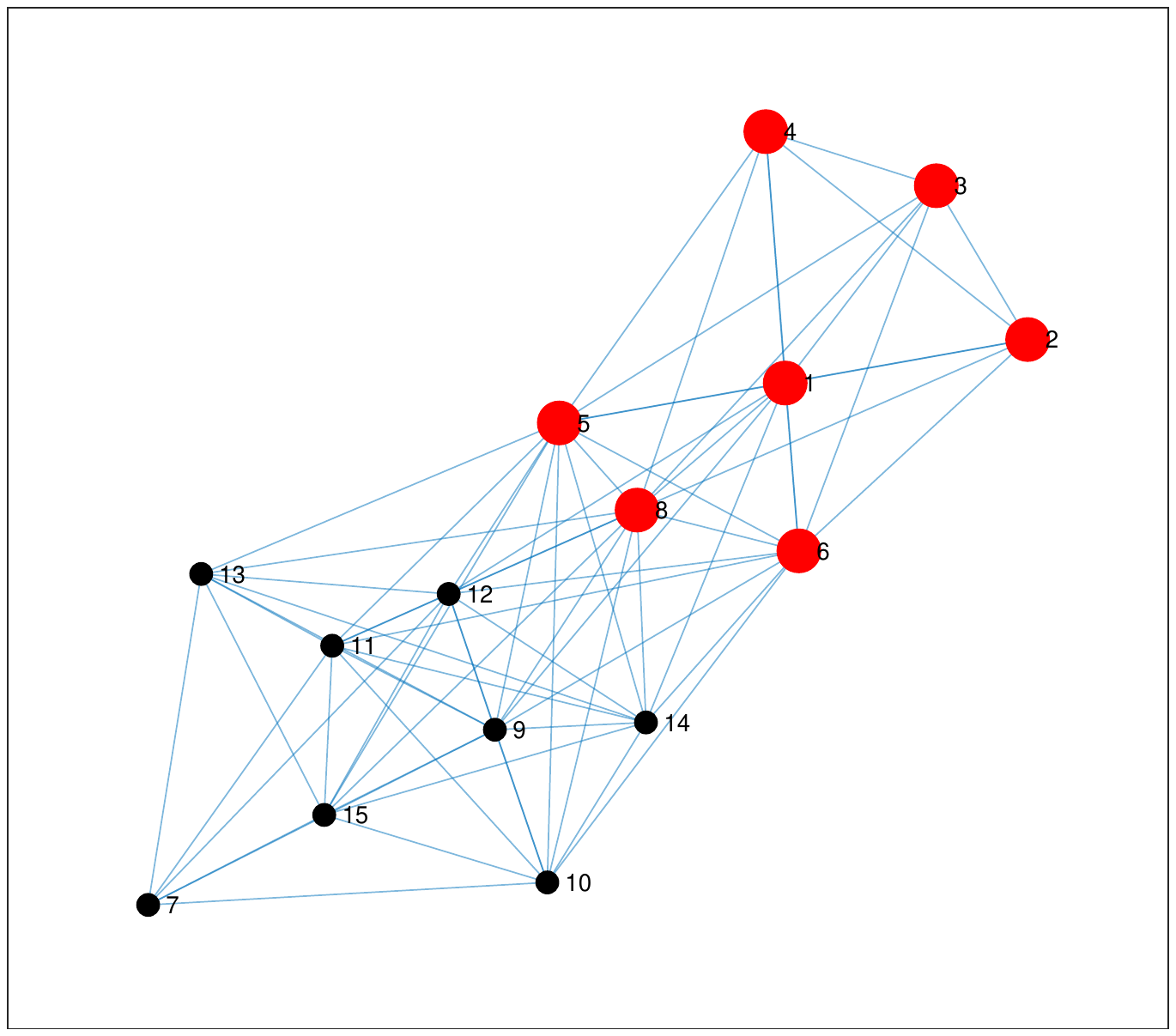}
\includegraphics[width=80mm]{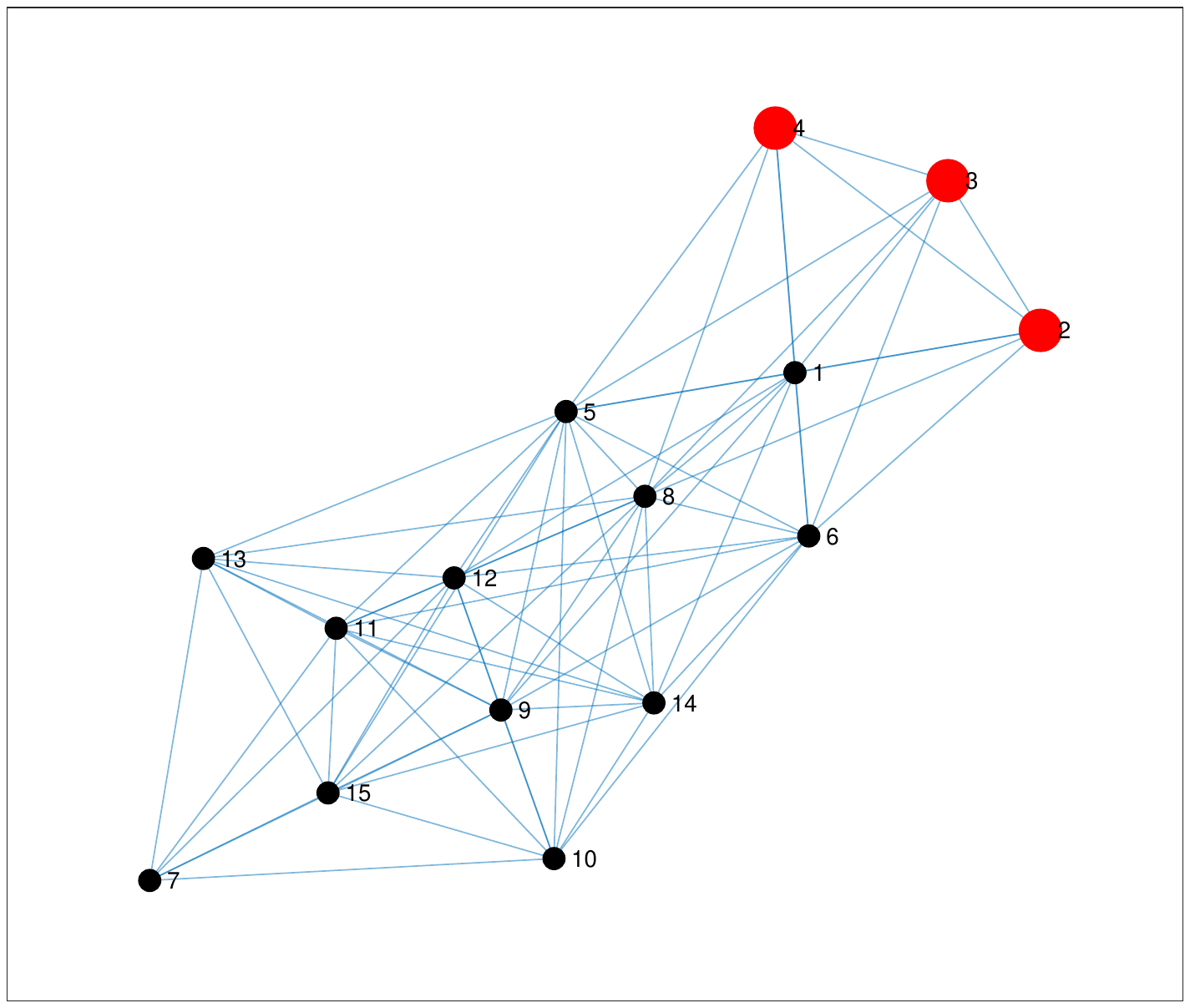}
\vspace{-2cm}
\caption{Graph smoothing partitions: $\ell_2$ (left) and $\ell_1$ (right).}
\label{fig:example_smoothing}
\end{figure}
} \endproof
\end{example}

Concerning the previous example note that each of the two solutions are optimal in the respective optimization problems, so comparing partitions is a bit artificial. However, both methods may be used for partitioning, and which one is the better is hard to say in general. It depends on the underlying application and the type of graph considered. Computationally, the Fiedler vector can be computed fast, while for larger graphs we need heuristics for finding approximate sparsest cuts. On the other hand, a sparse or sparsest cut is a reasonable notion and can be analysed for specific graph classes, while the Fiedler vector relies on an eigenvector which is not easy to give a direct combinatorial interpretation.
Finally, for clustering it might be possible to combine these two methods, possibly also with  some of the many other approaches known for graph clustering.

\section{Graph smoothing in $\ell_{\infty}$-norm and concluding remarks}

We conclude the paper with some remarks. First, it also makes sense to consider the graph smoothing problem in other norms, such as the $\ell_{\infty}$-norm (given by $\|x\|_{\infty}=\max_i |x_i|$). This leads to the {\em $\ell_{\infty}$-graph smoothing} problem
\begin{equation}
 \label{eq:smooth_inf}
     \gamma(G)=\min\{\max_{uv \in E} |x_u-x_v|: \sum_{v \in V} x_v=0, \; \|x\|_{\infty}=1\}.
\end{equation}
In contrast to the $\ell_1$-graph smoothing problem we can solve $(\ref{eq:smooth_inf})$ efficiently, for general graph $G$.

\begin{theorem}
 The $\ell_{\infty}$-graph smoothing problem $(\ref{eq:smooth_inf})$ can be solved in polynomial time, using linear programming.
\end{theorem}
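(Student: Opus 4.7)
The plan is to reduce the problem to solving a polynomial number of ordinary linear programs by enumerating which coordinate of $x$ attains the maximum absolute value. The only obstruction to a direct LP formulation is the non-convex normalization constraint $\|x\|_\infty = 1$, which says that some $|x_v|$ equals $1$ while all others are at most $1$ in absolute value.

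First I would observe that if $x$ is feasible in $(\ref{eq:smooth_inf})$, then so is $-x$, and both have the same objective value (since $\sum_v (-x_v) = 0$, $\|-x\|_\infty = 1$, and $|{-x_u}-({-x_v})| = |x_u - x_v|$). Therefore we may restrict attention to feasible $x$ for which there exists a vertex $v^* \in V$ with $x_{v^*} = +1$. For each choice of $v^* \in V$, I would solve the following linear program in variables $t \in \mathbb{R}$ and $x = (x_v)_{v \in V} \in \mathbb{R}^V$:
\begin{equation*}
\begin{array}{rll}
\mbox{minimize} & t & \\
\mbox{subject to} & x_u - x_v \le t & (uv \in E),\\
& x_v - x_u \le t & (uv \in E),\\
& -1 \le x_v \le 1 & (v \in V),\\
& x_{v^*} = 1, & \\
& \sum_{v \in V} x_v = 0. &
\end{array}
\end{equation*}
Call the optimal value of this LP $\gamma_{v^*}(G)$, and let $\gamma^*(G) = \min_{v^* \in V} \gamma_{v^*}(G)$.

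Next I would verify $\gamma(G) = \gamma^*(G)$. For the inequality $\gamma(G) \ge \gamma^*(G)$, take any $x$ optimal for $(\ref{eq:smooth_inf})$; after possibly replacing $x$ by $-x$, some coordinate $x_{v^*}$ equals $+1$, and then $(x, \max_{uv\in E}|x_u-x_v|)$ is feasible for the LP corresponding to $v^*$, proving $\gamma_{v^*}(G) \le \gamma(G)$. Conversely, any LP-feasible pair $(x,t)$ satisfies $x_{v^*} = 1$, so $\|x\|_\infty = 1$ (the upper box constraints ensure the max is not exceeded elsewhere), and the first two families of constraints together with minimization force $t = \max_{uv \in E}|x_u - x_v|$, so $x$ is feasible for $(\ref{eq:smooth_inf})$ with objective $t$, giving $\gamma(G) \le \gamma_{v^*}(G)$.

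Since the algorithm solves $n$ linear programs of size polynomial in $n$ and $|E|$, and linear programming is solvable in polynomial time (e.g., by interior point or the ellipsoid method), the total running time is polynomial. The only subtle point, and the place where something could go wrong, is handling the non-convex normalization $\|x\|_\infty = 1$: fixing $x_{v^*} = 1$ rigidly forces the max-absolute-value to be attained at $v^*$, while the symmetry $x \mapsto -x$ removes the need to also enumerate a sign, so altogether $n$ LPs suffice.
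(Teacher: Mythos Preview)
Your proposal is correct and takes essentially the same approach as the paper: both fix the vertex attaining $\|x\|_\infty=1$ (using the symmetry $x\mapsto -x$ to avoid enumerating signs), linearize $\max_{uv\in E}|x_u-x_v|$ via an auxiliary variable, and solve the resulting $n$ linear programs. Your write-up is in fact slightly more careful than the paper's in verifying the two inequalities $\gamma(G)\ge\gamma^*(G)$ and $\gamma(G)\le\gamma^*(G)$.
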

\begin{proof}
We rewrite problem $(\ref{eq:smooth_inf})$ using a construction rather similar to what we did for the  $\ell_1$-graph smoothing problem in (\ref{eq:LP-smooth}). For each $k \le n$ consider the  linear programming problem LP$(k)$: 
\begin{equation}
 \label{eq:LP_inf}
 \begin{array}{lrlll} \vspace{0.1cm}
 \mbox{\rm minimize}  & y \\ \vspace{0.1cm}
    &       x_i - x_j  &\le &y  & (ij \in E),\\ \vspace{0.1cm}
     &      -(x_i - x_j) &\le &y & (ij \in E),\\ \vspace{0.1cm}
     & \sum_{j=1}^n x_j&= &0, \; \\ \vspace{0.1cm}
      &  \multicolumn{3}{r}{-1 \le x_j \le 1} & (j \le n),  \\ \vspace{0.1cm}
      &  \multicolumn{3}{r}{\mbox{\rm $x_k=1$.}}
 \end{array}
\end{equation}
Here the variables are  $x=(x_1, x_2, \ldots, x_n)$ and $y$. Note that  in every optimal solution of (\ref{eq:LP_inf}) $y=\max_{uv \in E} |x_u-x_v|$, and $\|x\|_{\infty}=1$. It follows that LP$(k)$ solves the $\ell_{\infty}$-graph smoothing problem under the additional restriction that $x_k=1$. (Due to symmetry, $-x$ satisfies the first four constraints when $x$ does, so restricting one component to $-1$ is not needed.) So, by solving LP$(k)$ for every $k \le n$, and taking the minimum $y$ found, we solve the $\ell_{\infty}$-graph smoothing problem. Since LP problems can be solved in polynomial time, the resulting algorithm is a polynomial-time algorithm.
\end{proof}

\begin{proposition}
 Consider the path $P_n$ with $n$ vertices. Then the minimum value of the $\ell_{\infty}$-graph smoothing problem $(\ref{eq:smooth_inf})$ is $\gamma=\gamma(P_n)=2/(n-1)$ and an optimal solution is 
 \[
    x=(1, 1-\gamma, 1-2\gamma, \ldots, 1-(n-2)\gamma, -1).
 \]
\end{proposition}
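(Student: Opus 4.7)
The plan is to establish the proposition in two pieces: first verify that the proposed vector achieves objective value $\gamma = 2/(n-1)$, then prove a matching lower bound for any feasible vector.

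For the upper bound, I would check feasibility and compute the objective directly. The sum $\sum_{k=0}^{n-2}(1 - k\gamma) + (-1) = (n-2) - \gamma \binom{n-1}{2}$ vanishes precisely when $\gamma = 2/(n-1)$; since $x_1 = 1$ and a short calculation confirms $x_i \in [-1,1]$ for all $i$, the $\ell_\infty$-norm constraint is satisfied. The consecutive differences $|x_i - x_{i+1}|$ equal $\gamma$ for $i = 1, \ldots, n-2$ by construction, and the last difference $|x_{n-1} - x_n| = |1 - (n-2)\gamma + 1| = 2 - 2(n-2)/(n-1) = 2/(n-1) = \gamma$, so the objective equals $\gamma$ exactly.

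The heart of the argument is the lower bound $\gamma(P_n) \ge 2/(n-1)$. Let $x$ be any feasible vector and set $\delta = \max_{1 \le i < n} |x_{i+1} - x_i|$. The constraint $\|x\|_\infty = 1$ forces either $\max_i x_i = 1$ or $\min_i x_i = -1$; by the symmetry $x \leftrightarrow -x$ (which preserves every constraint and the objective) I may assume the former, so fix an index $k$ with $x_k = 1$. A telescoping argument along the path gives $|x_i - x_k| \le |i - k|\delta$, hence $x_i \ge 1 - |i-k|\delta$ for every $i$. Summing over $i$ and invoking $\sum_i x_i = 0$ yields
\[
    0 \;=\; \sum_{i=1}^n x_i \;\ge\; n - \delta \sum_{i=1}^n |i - k|.
\]
Since $\sum_{i=1}^n |i - k|$ is maximized over $k \in \{1,\ldots,n\}$ at the endpoints, where it equals $n(n-1)/2$, one concludes $\delta \ge 2n/(n(n-1)) = 2/(n-1)$.

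I do not anticipate any real obstacle here: the lower bound rests on combining a path telescoping inequality with the zero-sum constraint, and the worst-case ``reach'' of the bound is attained exactly when the extremal value sits at an endpoint of the path — which is precisely the configuration realized by the proposed optimal solution, confirming tightness.
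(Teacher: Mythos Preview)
Your proof is correct and follows essentially the same route as the paper's: both use the symmetry $x \leftrightarrow -x$ to place a component equal to $1$ at some index $k$, telescope along the path to get $x_i \ge 1 - |i-k|\delta$, and then combine with the zero-sum constraint. The only cosmetic difference is that the paper argues by contradiction (assuming $\delta < \gamma$ and obtaining $\sum_i z_i > 0$), whereas you derive the bound $\delta \ge 2/(n-1)$ directly and make the endpoint maximization of $\sum_i |i-k|$ explicit.
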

\begin{proof}
 For the specified $x$ we have $|x_i-x_{i+1}|=\gamma$ for $i\le n-1$, so $\max_i |x_i-x_{i+1}|=\gamma$. Moreover, $\sum_i x_i=0$ and $\|x\|_{\infty}=1$. Assume there is an $z=(z_1, z_2, \ldots, z_n)$ satisfying $\sum_i z_i=0$,  $\|z\|_{\infty}=1$ and  $\max_i |z_i-z_{i+1}|<\gamma$. By symmetry we may assume that some component of $z$ is 1, say $z_k=1$. Then, if $k<n$,  $|z_k-z_{k+1}|<\gamma$, so $z_{k+1}>1-\gamma$. Similarly, $z_{k+2}>1-2\gamma$, and in general $z_s>1-|s-k|\gamma$. But then it is easy to see that 
\[
  \sum_i z_i > \sum_i x_i=0,
\]
which contradicts that $\sum_i z_i=0$. Thus, the optimal value of  $(\ref{eq:smooth_inf})$ is $\ge \gamma$, and therefore equal to $\gamma$, as desired.
\end{proof}

\begin{example}
{\rm 
Consider the path $P_6=v_1, v_2, v_3, v_4,v_5,v_6$, with corresponding variables $x_i$ ($ i \le 5$). Then $\gamma=2/5$ and an optimal solution of the $\ell_{\infty}$-graph smoothing problem is
\[
    x=(1, 3/5,1/5,-1/5, -3/5, -1).
\]
Thus consecutive components of $x^*$ differ in absolute value by $y=2/5$. 
} \endproof
\end{example}

One can also solve $(\ref{eq:smooth_inf})$ explicitly for stars, and some other graphs. It is ongoing work to investigate the $\ell_{\infty}$-graph smoothing problem further and to relate to the  $\ell_1$- and $\ell_2$-graph smoothing problems. Moreover, it is interesting to see if the corresponding cuts obtained from the signs in an optimal $x^*$ are useful in partitioning problems.

\medskip
Finally, we remark that the main contribution of the present paper was to investigate a variant of the optimization (variational) characterization of algebraic connectivity, by changing into the $\ell_1$-norm. We showed strong optimality properties that are similar to the Fiedler theory for algebraic connectivity. Also, we showed that optimal solutions correspond to sparsest cuts which gives a new way to view these combinatorial objects. We believe that further work on similar optimization problems, in different norms, would be interesting. This includes to combine the different approaches into useful algorithms for important applications in clustering problems in graphs.

\end{document}